\numberwithin{equation}{section}
\newtheorem{thm}[equation]{Theorem}
\newtheorem{cor}[equation]{Corollary}
\newtheorem{hypo}[equation]{Hypothesis}
\newtheorem{lem}[equation]{Lemma}
\newtheorem{prop}[equation]{Proposition}
\theoremstyle{definition}
\newtheorem{defn}[equation]{Definition}
\newtheorem{eg}[equation]{Example}
\newcommand{\Stab}{\operatorname{Stab}}
\newcommand{\sgn}{\operatorname{sgn}}
\newcommand{\res}{\operatorname{res}}
\newcommand{\Ext}{\operatorname{\mathrm{Ext}}}
\renewcommand{\mod}{\operatorname{mod\,}}
\begin{document}
\title[]{The Varieties for some Specht Modules}
\author{Kay Jin Lim}

\maketitle
\begin{abstract} J. Carlson introduced the cohomological and rank variety
for a module over a finite group algebra. We give a general form for
the largest component of the variety for the Specht module for the
partition $(p^p)$ of $p^2$ restricted to a maximal elementary
abelian $p$-subgroup of rank $p$. We determine the varieties of a
large class of Specht modules corresponding to $p$-regular
partitions. To any partition $\mu$ of $np$ of not more than $p$
parts with empty $p$-core we associate a unique partition
$\Phi(\mu)$ of $np$, where the rank variety of the restricted Specht
module $S^\mu{\downarrow_{E_n}}$ to a maximal elementary abelian
$p$-subgroup $E_n$ of rank $n$ is $V_{E_n}^\sharp(k)$ if and only if
$V_{E_n}^\sharp(S^{\Phi(\mu)})=V_{E_n}^\sharp(k)$. In some cases
where $\Phi(\mu)$ is a 2-part partition, we show that the rank
variety $V_{E_n}^\sharp(S^\mu)$ is $V^\sharp_{E_n}(k)$. In
particular, the complexity of the Specht module $S^\mu$ is $n$.
\end{abstract}

\section{Introduction}

Over last forty years, group cohomology has been studied
extensively, especially its interaction with modular representation
theory. Carlson \cite{JC1} introduced the varieties for modules over
group algebras. Various results have been published which relate the
properties of the algebraic variety of a module and the structure of
the module itself.

Hemmer and Nakano studied the support varieties for permutation
modules and Young modules \cite{DHDN} over the symmetric groups. The
varieties for most Specht modules remain unknown. A partition is
made up of $(p\times p)$-blocks if each part is a multiple of $p$
and each part is repeated a multiple of $p$ times. The VIGRE
research group in Georgia (2004) conjectured that the variety for
the Specht module corresponding to a partition $\mu$ is the variety
of the defect group of the block in which the Specht module lies
unless and only unless the partition is made up of $(p\times
p)$-blocks.

It is not difficult to verify the conjecture when $\mu$
has $p$-weight strictly less than $p$. In the first part of this
paper, we study the variety for the Specht module corresponding to
the partition $(p^p)$. In the latter part, we verify the conjecture
for a large class of partitions of $p^2$ and $np$ for some positive
integer $n$, where they are not made up of $(p\times p)$-blocks.

We organize this paper as follows. A brief introduction to the
varieties for modules and representation theory of symmetric groups
is given in Section \ref{notations}, the standard texts are
\cite{DB} II, \cite{GJ1} and \cite{GJAK}. We also set up notations
in this section. Our main results are stated in Section \ref{main
results}. In Section \ref{p by p}, in the case $p$ is odd, we show
that the dimension of the rank variety for the Specht module
$S^{(p^p)}$ is $p-1$ . This therefore gives the complexity of the
Specht module. In \cite{JC2}, Carlson gives an upper bound for the
degree of the projectivized rank variety for a module over an
elementary abelian $p$-group. Later in this section, we show that
the radical ideal corresponding to the largest component of
$S^{(p^p)}{\downarrow_{E_p}}$ is generated by a single non-zero
polynomial, where a general formula is given. This general
polynomial shows that the degree of the projectivized rank variety
for the module $S^{(p^p)}{\downarrow_E}$ is non-zero and divisible by $(p-1)^2$.

In Section \ref{not p by p}, we determine the varieties of a large
class of Specht modules corresponding to $p$-regular partitions. For
each partition $\mu$ of $np$ with no more than $p$ parts which has
empty $p$-core, we associate a unique partition $\Phi(\mu)$ of $np$
such that each part of the partition is a multiple of $p$ and with
the property that the rank variety $V_{E_n}^\sharp(S^\mu)$ is
$V^\sharp_{E_n}(k)$ if and only if
$V_{E_n}^\sharp(S^{\Phi(\mu)})=V^\sharp_{E_n}(k)$. In some cases
where $\Phi(\mu)$ is a 2-part partition, we show that the rank
variety for the Specht module $S^\mu$ restricted to a maximal
elementary abelian $p$-subgroup $E$ of the symmetric group
$\mathfrak{S}_{np}$ of rank $n$ is precisely the rank variety for
the trivial module. In this case, the complexity of the Specht
module is $n$. At the end of this section, we give a complete list
of the varieties for the modules $S^\mu{\downarrow_{E_3}}$ where
$\mu$ are partitions of 9, over $p=3$.

\section{Background Materials and Notations}\label{notations}

Throughout this paper $p$ is a prime and $k$ is an algebraically
closed field of characteristic $p$. Let $G$ be a finite group whose
order is divisible by $p$ and $M$ be a finitely generated
$kG$-module. Carlson \cite{JC1} introduced the cohomological variety
$V_G(M)$ for the module $M$. Avrunin and Scott show that the variety
$V_G(M)$ has a stratification \cite{DB}
$$V_G(M)=\bigcup_{E\in\mathcal{E}(G)}\res^\ast_{G,E}V_E(M)$$ where
$\mathcal{E}(G)$ is a set of representatives for the conjugacy
classes of elementary abelian $p$-subgroups of $G$ and
$\res^\ast_{G,E}:V_E(k)\to V_G(k)$ is the map induced by the
restriction map $\res_{G,E}:\Ext^\bullet_{kG}(k,k)\to
\Ext^\bullet_{kE}(k,k)$. The complexity of the module $M$ is
precisely the dimension $\dim V_G(M)$ of the variety $V_G(M)$. Since
the map $\res^\ast_{G,E}$ is a finite map, we have $$\dim
V_G(M)=\max_{E\in\mathcal{E}(G)}\{\dim \res^\ast_{G,E}
V_E(M)\}=\max_{E\in \mathcal{E}(G)}\{\dim V_E(M)\}$$

Let $E$ be an elementary abelian group of order $p^n$ with
generators $g_1,\ldots,g_n$ and $M$ be a finitely generated
$kE$-module. For a generic affine point $0\neq
\alpha=(\alpha_1,\ldots,\alpha_n)\in k^n$, the Jordan type of the
restriction $M{\downarrow_{k\langle u_\alpha\rangle}}$ is called the
generic Jordan type of the module $M$, where
$u_\alpha=1+\sum_{i=1}^n \alpha_i(g_i-1)$ (see \cite{JC1} and
\cite{EFJPAS}). Generic Jordan type is compatible with direct sum
and tensor product of two $kE$-modules, where the tensor product is
given by the diagonal $E$-action (Proposition 4.7 \cite{EFJPAS}).
The stable generic Jordan type of $M$ is the generic Jordan type of
$M$ modulo projective direct summands. The rank variety
$V_E^\sharp(M)$ of the $kE$-module $M$ is the set
$$\{0\}\cup \{0\neq \alpha\in k^n\,|\,\text{$M{\downarrow_{k\langle
u_\alpha\rangle}}$ is not $k\langle u_\alpha\rangle$-free}\}$$ It is
isomorphic to the cohomological variety of the module $M$ (via the
Frobenius map when $p$ is odd) \cite{GALS}. Note that $M$ is not
generically free if and only if its rank variety $V^\sharp_E(M)$ is
$V_E^\sharp(k)$.

\begin{prop}\label{summary of properties of varieties} Let $M,N$ be $kG$-modules.
\begin{enumerate}
\item [$\mathrm{(i)}$] $V_G(M)=\{0\}$ if and only if $M$ is projective $kG$-module.

\item [$\mathrm{(ii)}$] $V_G(M\oplus N)=V_G(M)\cup V_G(N)$ and $V_G(M\otimes_k N)=V_G(M)\cap V_G(N)$ with the diagonal $G$-action.

\item [$\mathrm{(iii)}$] If $p\not|\dim_k M$, then $V_G(M)=V_G(k)$.

\item [$\mathrm{(iv)}$]  Suppose that $M$ is an indecomposable $kG$-module in a block
with defect group $D$. Then $V_G(M)=\res^\ast_{G,D}V_D(M)$. Let
$|G|=p^as$ with $\gcd(s,p)=1$. Then $p^a/|D|$ divides $\dim_kM$.
Furthermore, if
$$\gcd\left (\frac{|D|\dim_k M}{p^a},p\right )=1$$ then
$V_G(M)=\res^\ast_{G,D}V_D(k)$.

\item [$\mathrm{(v)}$] If $H$ be a subgroup of $G$, then $V_H(M)=
\left (\res^\ast_{G,H}\right )^{-1}V_G(M)$.

\item [$\mathrm{(vi)}$] The variety $V_G(M)$ is a homogeneous subvariety
of $V_G(k)$. If $M$ is indecomposable, then the projectivized
variety $\overline{V_G(M)}$ is connected when endowed with the
Zariski topology.

\item [$\mathrm{(vii)}$]  Suppose further that $G$ is an elementary abelian $p$-group
of rank $n$ and $r=\dim V_G(M)$. Then $p^{n-r}$ divides $\dim_k M$.
\end{enumerate}
\end{prop}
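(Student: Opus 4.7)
The plan is to treat this proposition as a compendium of standard properties of varieties for modules and organize the proof into three blocks, citing or reproducing the short classical arguments due to Carlson, Avrunin--Scott, Alperin--Evens and presented in Benson's text \cite{DB} rather than developing any new machinery.

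First I would dispatch the items that are formal consequences of the definitions. For (i), the forward direction is immediate from the vanishing of $\Ext^\bullet_{kG}(M,M)$ in positive degrees when $M$ is projective, while the converse uses Chouinard's theorem to reduce to the elementary abelian case and then invokes the rank variety description. Items (ii) and (v) come straight from the behaviour of annihilator ideals in $H^\bullet(G,k)$: direct sum gives intersection of annihilators (hence union of varieties), the cup product structure on $\Ext$ yields the tensor formula with the diagonal action, and restriction corresponds to pullback along $\res^\ast_{G,H}$. Item (vi) is Carlson's connectedness theorem; I would argue contrapositively, showing that a disconnection of $\overline{V_G(M)}$ lifts to an idempotent decomposition in the stable module category and forces $M$ to split.

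Next come the dimension statements. For (iii), the key observation is that coevaluation composed with evaluation on $M\otimes_k M^\ast$ is multiplication by $\dim_k M$, so when $p\nmid\dim_k M$ the trivial module $k$ is a summand of $M\otimes_k M^\ast$; together with (ii) this yields $V_G(k)\subseteq V_G(M)$, and the reverse inclusion is automatic. For (vii) I would invoke Dade's lemma together with the rank variety characterization: since $\dim V_E^\sharp(M)=r$, a generic rank-$(n-r)$ shifted elementary abelian subgroup $F$ inside $kE$ meets the variety only at the origin, so $M$ restricts to a free $kF$-module, forcing $|F|=p^{n-r}$ to divide $\dim_k M$.

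The main obstacle is item (iv), which mixes block theory with variety theory. The first equality $V_G(M)=\res^\ast_{G,D}V_D(M)$ follows from Green's theorem (applied within the block, $M$ is a summand of $(M{\downarrow_D}){\uparrow^G}$) combined with the compatibility of varieties with induction and restriction from (v). The divisibility $p^a/|D|\mid\dim_k M$ is Brauer's classical defect theorem. For the final assertion I would apply (iii) inside $V_D(M)$: the hypothesis $\gcd(|D|\dim_k M/p^a,p)=1$ says precisely that $p^a/|D|$ is the exact $p$-part of $\dim_k M$, so some non-projective indecomposable $kD$-summand of $M{\downarrow_D}$ has $p$-prime dimension and hence variety equal to $V_D(k)$. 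Pushing this forward through $\res^\ast_{G,D}$ yields $V_G(M)=\res^\ast_{G,D}V_D(k)$; the delicate step is verifying that restriction to $D$ does not collapse the non-projective part which carries the full variety.
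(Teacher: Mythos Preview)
The paper does not prove this proposition: it is presented as a summary of standard facts, with the surrounding text pointing the reader to Benson~\cite{DB}~II. Your plan to sketch each item therefore goes beyond what the paper does, and your outlines for (i)--(iii), (v), (vi), (vii) are the familiar textbook arguments.

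Your treatment of the final assertion in (iv), however, has a real gap. From the hypothesis you correctly read off that the $p$-part of $\dim_k M$ is exactly $p^a/|D|$, and this does force the vertex of $M$ to be $D$. But the step ``so some non-projective indecomposable $kD$-summand of $M{\downarrow_D}$ has $p$-prime dimension'' does not follow from what precedes it. Indecomposable $kD$-modules with vertex $D$ need not have dimension coprime to $p$: for $D$ elementary abelian of rank two and generic $\zeta\in H^2(D,k)$, Carlson's module $L_\zeta$ is indecomposable of dimension $p^2$, has vertex $D$, yet $V_D(L_\zeta)$ is a single line rather than all of $V_D(k)$. So neither (iii) nor the vertex computation hands you a summand carrying the full variety. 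Moreover, the step you flag as ``delicate''---that restriction to $D$ might collapse the full-variety part---is actually automatic once such a summand $N$ exists, since $V_G(M)=\res^\ast_{G,D}V_D(M{\downarrow_D})\supseteq\res^\ast_{G,D}V_D(N)$; the genuine obstruction sits earlier, in producing $N$ with $V_D(N)=V_D(k)$ at all. A correct argument for this height-zero clause requires sharper block-theoretic input than the other items of the proposition supply.
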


The standard texts for the representation of symmetric groups are
\cite{GJ1} and \cite{GJAK}. Let $\mathfrak{S}_m$ denote the
symmetric group on $m$ letters. For each $s\leq n$, let $E_s$ be the
elementary abelian subgroup of $\mathfrak{S}_{np}$ generated by the
$p$-cycles $((i-1)p+1,\ldots,ip)$ for $1\leq i\leq s$. A partition
$\mu$ of $m$ is a sequence of positive integers
$(\mu_1,\mu_2,\ldots,\mu_s)$ such that $\mu_1\geq \mu_2\geq
\ldots\geq \mu_s$ and $\sum_{i=1}^s\mu_i=m$; in this case we write
$|\mu|=m$. The Young subgroup $\mathfrak{S}_\mu$ of $\mathfrak{S}_m$
is
$$\mathfrak{S}_{\{1,\ldots,\mu_1\}}\times
\mathfrak{S}_{\{\mu_1+1,\ldots,\mu_1+\mu_2\}}\times\ldots\times
\mathfrak{S}_{\{\mu_1+\ldots+\mu_{s-1}+1,\ldots,\mu_1+\ldots+\mu_s\}}$$
The permutation module $M^\mu\cong
k_{\mathfrak{S}_\mu}{\uparrow^{\mathfrak{S}_m}}$ is the
$k\mathfrak{S}_m$-module $k$-spanned by all $\mu$-tabloids $\{t\}$
where $\mathfrak{S}_m$ acts by permuting the numbers assigned to the
nodes of $t$. The Specht module $S^\mu$ is the submodule of $M^\mu$
$k$-spanned by the $\mu$-polytabloids $e_t=\sum_{\sigma\in
C_t}\sgn(\sigma)\sigma \{t\}$ where $C_t$ is the column stabilizer
of the $\mu$-tableau $t$. It has dimension given by the hook formula
$$\dim_k S^\mu=\frac{m!}{\displaystyle{\prod_{i,j\in[\mu]}h_{ij}}}$$ where
$h_{ij}$ is the hook length of the $(i,j)$-node of the Young diagram
$[\mu]$ of $\mu$. The number of skew $p$-hooks removed to obtain the
$p$-core $\widetilde{\mu}$ of $\mu$ is precisely the number of hook
lengths of $\mu$ divisible by $p$, we write $m_\mu$ for this common
number. A defect group $D_\mu$ of the block containing the Specht
module $S^\mu$ is a Sylow $p$-subgroup of the symmetric group
$\mathfrak{S}_{m_\mu p}$.

The Young module $Y^\mu$ corresponding to the partition $\mu$ is an
indecomposable direct summand of $M^\mu$ with the property that each
indecomposable summand of $M^\mu$ is isomorphic to some $Y^\lambda$
with $\lambda\unrhd\mu$ by the lexicographic ordering (Theorem 3.1
of \cite{GJ2}) and $Y^\mu$ occurs with multiplicity one. Each
$Y^\mu$ has a Specht filtration and the filtration multiplicity is
well-defined \cite{SD}. The permutation module $M^{(r-k,k)}$
corresponding to a two part partition $(r-k,k)$ has a Specht
filtration with factors
$$S^{(r)},S^{(r-1,1)},\ldots, S^{(r-k,k)}$$ reading from the top
(Theorem 17.13 of \cite{GJ1}). So any Young module $Y^\lambda$
appearing in the decomposition of $M^{(r-k,k)}$ satisfies
$\lambda=(r-s,s)$ for some $s\leq k$ and has multiplicity one.

\begin{prop}\label{summary of symmetric groups representation}\
\begin{enumerate}
\item [(i)]\textup{[The Branching Theorem ($\S 9$ of \cite{GJ1})]} Let $\mu$ be a partition of $m$
and $\Omega(\mu)$ be the set consisting of all partitions obtained
by removing a node from $\mu$, then the $k\mathfrak{S}_{m-1}$-module
$S^\mu{\downarrow_{\mathfrak{S}_{m-1}}}$ has a filtration with
Specht factors $S^\lambda$ one for each $\lambda\in\Omega(\mu)$.

\item[(ii)]\textup{[Nakayama's Conjecture]} Let $\mu,\lambda$ be
partitions of $m$. Then the Specht modules $S^\mu,S^\lambda$ lie in
the same block if and only if $\widetilde{\mu}=\widetilde{\lambda}$.
So any $k\mathfrak{S}_m$-module which has a filtration with Specht
factors decomposes into direct summands according to $p$-cores.

\item[(iii)] Let $C_p$ be the cyclic group of order $p$ and $J_i$
be the Jordan block of size $0\leq i\leq p$. Then there is an
extension of indecomposable $kC_p$-modules
$$0\to J_i\to J_a\oplus J_b\to J_j\to 0$$ with $a\geq b$ if and only if
$\max\{i,j\}\leq a\leq p$, $0\leq b\leq \min\{i,j\}$ and $a+b=i+j$.

\item[(iv)] If $\mu$ is a partition of $np$ with non-empty
$p$-core, then $S^\mu{\downarrow_{E_n}}$ is generically free.

\item [(v)] Let $\mu'$ be the conjugate of a partition $\mu$ of $m$. Then
$V_{\mathfrak{S}_m}(S^{\mu'})=V_{\mathfrak{S}_m}(S^\mu)$. In
particular, $V^\sharp_E(S^{\mu'})=V^\sharp_E(S^\mu)$ for any
elementary abelian $p$-subgroup $E$ of $\mathfrak{S}_m$.
\end{enumerate}
\end{prop}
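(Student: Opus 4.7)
The plan is to handle the five items in order, noting that parts (i), (ii), (iii), and (v) are essentially bookkeeping on standard material, while (iv) is the only one requiring real work.

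For (i) I would cite the Branching Theorem directly from \cite{GJ1}. For (ii), Nakayama's Conjecture is proved in \cite{GJAK}; the second sentence---that any $k\mathfrak{S}_m$-module with a Specht filtration splits along $p$-cores---is immediate once one knows that $\Ext^1_{k\mathfrak{S}_m}(S^\lambda,S^\mu)=0$ whenever $\widetilde{\lambda}\neq\widetilde{\mu}$, which is a direct consequence of the block decomposition. For (iii), the assertion is elementary $kC_p$-module theory: $a+b=i+j$ is a dimension count, $a\geq\max\{i,j\}$ follows because $J_i\hookrightarrow J_a\oplus J_b$ must contain a vector of nilpotency degree $i$ and $J_j$ must surject from a single block of size $\geq j$, and $b\leq\min\{i,j\}$ is the complementary bound; sufficiency is by explicit construction of the extension from the obvious maps between Jordan blocks.

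Part (iv) is the substantive one. Since $\widetilde{\mu}\neq\emptyset$ we have $m_\mu<n$, and the defect group $D_\mu$ is a Sylow $p$-subgroup of $\mathfrak{S}_{m_\mu p}$, embedded in $\mathfrak{S}_{np}$ with support of cardinality at most $m_\mu p$. Proposition~\ref{summary of properties of varieties}(iv),(v) give
$$V_{E_n}(S^\mu)=\bigl(\res^\ast_{\mathfrak{S}_{np},E_n}\bigr)^{-1}\res^\ast_{\mathfrak{S}_{np},D_\mu}V_{D_\mu}(S^\mu),$$
so by Avrunin--Scott it suffices to verify that for every $g\in\mathfrak{S}_{np}$ the intersection $E_n\cap{}^gD_\mu$ has rank strictly less than $n$. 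But $E_n$ is generated by commuting $p$-cycles $c_1,\ldots,c_n$ on disjoint blocks of size $p$, and a product $\prod c_i^{a_i}$ lies in ${}^gD_\mu$ only if each block on which it acts nontrivially is contained in the support of ${}^gD_\mu$; since that support has size at most $m_\mu p$, at most $m_\mu<n$ of the generators $c_i$ can contribute, so $E_n\cap{}^gD_\mu$ has rank at most $m_\mu$. Hence $V^\sharp_{E_n}(S^\mu)$ is a proper subvariety of $V^\sharp_{E_n}(k)$.

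Part (v) follows formally from $S^{\mu'}\cong S^\mu\otimes\sgn$ (James~\cite{GJ1}): as $\sgn$ is one-dimensional, Proposition~\ref{summary of properties of varieties}(iii) gives $V_{\mathfrak{S}_m}(\sgn)=V_{\mathfrak{S}_m}(k)$, and then part~(ii) of the same proposition yields $V_{\mathfrak{S}_m}(S^{\mu'})=V_{\mathfrak{S}_m}(S^\mu)$. For the rank variety over an elementary abelian $p$-subgroup $E\leq\mathfrak{S}_m$, every element of order $p$ with $p$ odd is a product of disjoint $p$-cycles, each of sign $(-1)^{p-1}=1$, so $\sgn\!\downarrow_E$ is trivial; when $p=2$, $\sgn$ already coincides with the trivial module. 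In either case $V_E^\sharp(S^{\mu'})=V_E^\sharp(S^\mu)$. The main obstacle is the support/rank computation in (iv), but it resolves cleanly once the support of $D_\mu$ in $\mathfrak{S}_{np}$ is identified.
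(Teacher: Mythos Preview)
Your handling of (i)--(iii) is fine, and your argument for (iv) is correct and close in spirit to the paper's. The paper argues purely by dimension: $\dim V_{E_n}(S^\mu)\le\dim V_{\mathfrak{S}_{np}}(S^\mu)=\dim V_{D_\mu}(S^\mu)$, and then bounds the latter via Quillen stratification by the maximal rank of an elementary abelian subgroup of $D_\mu\le\mathfrak{S}_{m_\mu p}$, which is strictly less than $n$ since $m_\mu<n$. Your version unwinds the same ingredients through an explicit support computation for $E_n\cap{}^gD_\mu$; both routes use the same key facts and reach the same conclusion.

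There is, however, a genuine slip in (v). You assert $S^{\mu'}\cong S^\mu\otimes\sgn$, but this isomorphism holds only in characteristic zero. Over $k$ of characteristic $p$ the correct statement (James \cite{GJ1}, 8.15) is
\[
S^{\mu'}\otimes_k S^{(1^m)}\;\cong\;(S^\mu)^\ast,
\]
i.e.\ tensoring with sign yields the \emph{dual} of $S^\mu$, not $S^\mu$ itself. The paper's proof uses exactly this: since $V_{\mathfrak{S}_m}(\sgn)=V_{\mathfrak{S}_m}(k)$ one obtains $V_{\mathfrak{S}_m}(S^{\mu'})=V_{\mathfrak{S}_m}\bigl((S^\mu)^\ast\bigr)$, and then invokes the standard fact $V_G(M^\ast)=V_G(M)$. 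Your argument is easily repaired by inserting this dualisation step, but as written it rests on a false isomorphism. Your separate verification that $\sgn{\downarrow_E}$ is trivial is correct but unnecessary: once $V_{\mathfrak{S}_m}(S^{\mu'})=V_{\mathfrak{S}_m}(S^\mu)$ is established, the rank-variety equality for any $E\le\mathfrak{S}_m$ follows from Proposition~\ref{summary of properties of varieties}(v).
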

\begin{proof} (iv) By Proposition \ref{summary of properties of varieties} (iv),
$V_{\mathfrak{S}_{np}}(S^\mu)=\res^\ast_{\mathfrak{S}_{np},D_\mu}
V_{D_\mu}(S^\mu)$ where $D_\mu$ is a Sylow $p$-subgroup of
$\mathfrak{S}_{m_\mu p}$. Since $m_\mu<n$, any maximal elementary
abelian $p$-subgroup of $D_\mu\leq \mathfrak{S}_{m_\mu p}$ has rank strictly less than $n$. So
$$\dim V_{E_n}(S^\mu)\leq \dim V_{\mathfrak{S}_{np}}(S^\mu)=\dim
V_{D_\mu}(S^\mu)\leq \max_{E\leq D_\mu}\{ \dim V_E(S^\mu)\}<n$$
where $E$ runs through all elementary abelian $p$-subgroups of
$D_\mu$ by the Quillen Stratification Theorem.

(v) Note that $S^{\mu'}\otimes_k S^{(1^m)}\cong (S^\mu)^\ast$ (see
8.15 \cite{GJ1}). By 5.7.3 and 5.8.5 of \cite{DB} II, we have
$V_{\mathfrak{S}_m}(S^{\mu'})=V_{\mathfrak{S}_m}(S^{\mu'})\cap
V_{\mathfrak{S}_m}(k)=V_{\mathfrak{S}_m}\left ((S^\mu)^\ast\right
)=V_{\mathfrak{S}_m}(S^\mu)$.
\end{proof}

\section{Main Results}\label{main results}

The VIGRE research group in Georgia (2004) conjectured that the
variety for the Specht module corresponding to a partition $\mu$ is
the variety of the defect group where the Specht module lies in if
and only if the partition is not made up of $(p\times p)$-blocks. Let
$W_i(S^{(p^p)})$ be the union of all irreducible components of
$V^\sharp_{E_p}(S^{(p^p)})\subseteq V^\sharp_{E_p}(k)$ of dimension
$i$. In Section \ref{p by p}, we prove the following theorem.

\begin{thm}\label{theorem p by p} Suppose that $p$ is an odd prime and $\mu=(p^p)$ is the $(p\times p)$-partition.
\begin{enumerate}
\item [$\mathrm{(i)}$] $\dim V_{\mathfrak{S}_{p^2}}(S^\mu)=p-1$.

\item [$\mathrm{(ii)}$] We have $I(W_{p-1}(S^{(p^p)}))=\sqrt{f}=\langle f\rangle$ where
$$\hspace{.5cm}f(x_1,\ldots,x_p)= (x_1\ldots
x_p)^{p-1}\widetilde{f}+\sum_{i=1}^{p} {x_1}^{n(p-1)}\ldots
\widehat{{x_i}^{n(p-1)}}\ldots{x_p}^{n(p-1)}$$ for some homogeneous
polynomial $\widetilde{f}\in
k[x_1,\ldots,x_p]^{(\mathbb{F}_p^\times)^p\rtimes \mathfrak{S}_p}$
and positive integer $n$ where ${x_1}^{n(p-1)}\ldots
\widehat{{x_i}^{n(p-1)}}\ldots {x_p}^{n(p-1)}$ is the product of all
${x_j}^{n(p-1)}$'s, $1\leq j\leq p$, except the term
${x_i}^{n(p-1)}$.
\end{enumerate}
\end{thm}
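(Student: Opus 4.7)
The plan combines a divisibility estimate for part (i) with a three-step analysis of the defining ideal of $W_{p-1}$ in part (ii): principality from unique factorization, invariance from the normalizer action, and the explicit form via restriction to coordinate hyperplanes. The geometric restriction step is the principal obstacle.

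For (i), the hook length formula yields $h_{ij} = 2p - i - j + 1$ in the $p \times p$ diagram; among these, only the $p$ anti-diagonal entries $i+j = p+1$ are divisible by $p$, each equal to $p$. Hence $v_p\bigl(\prod_{i,j} h_{ij}\bigr) = p$, and with $v_p((p^2)!) = p + 1$ by Legendre, $v_p(\dim_k S^{(p^p)}) = 1$. Setting $r = \dim V^\sharp_{E_p}(S^{(p^p)})$, Proposition \ref{summary of properties of varieties}(vii) gives $p^{p-r} \mid \dim_k S^{(p^p)}$, so $r \geq p - 1$. The matching upper bound $r \leq p - 1$ drops out of part (ii): a nonzero $f \in I(W_{p-1})$ makes $W_{p-1}$ a nonempty codimension-one subvariety of $k^p$, which is incompatible with $V^\sharp_{E_p}(S^{(p^p)})$ filling $k^p$.

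For (ii), first, since $k[x_1,\ldots,x_p]$ is a UFD, the radical ideal of a pure codimension-one variety is principal: take $f$ to be the product of the distinct irreducible polynomials cutting out the components of $W_{p-1}$, so $I(W_{p-1}) = \langle f \rangle$ and $f$ is squarefree. Second, the normalizer $N_{\mathfrak{S}_{p^2}}(E_p)$ surjects modulo the centralizer onto a subgroup of $\Aut(E_p)$ containing $(\mathbb{F}_p^\times)^p \rtimes \mathfrak{S}_p$: $\mathfrak{S}_p$ permutes the $p$ commuting $p$-cycles and each $\mathbb{F}_p^\times$ factor acts on one cyclic factor by $g_i \mapsto g_i^{a_i}$. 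Rank varieties of $k\mathfrak{S}_{p^2}$-modules are $N/C$-stable, so $\langle f \rangle$ is stable, and after a scalar rescaling $f$ is a genuine invariant. Since $a \in \mathbb{F}_p^\times$ scales $x_i$ by $a$, invariance forces every $x_i$-exponent in each monomial of $f$ to be a multiple of $p - 1$.

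Third, I aim to establish the geometric identity
\begin{equation*}
V^\sharp_{E^{(i)}}(S^{(p^p)}) \;=\; \bigcup_{j \neq i}\{\alpha \in k^p : \alpha_i = \alpha_j = 0\},
\end{equation*}
where $E^{(i)} = \langle g_j : j \neq i\rangle$ is the rank-$(p-1)$ subgroup obtained by omitting $g_i$. By Proposition \ref{summary of properties of varieties}(v) the left side equals $V^\sharp_{E_p}(S^{(p^p)}) \cap \{x_i = 0\}$, and the inclusion $\supseteq$ follows by applying Proposition \ref{summary of properties of varieties}(vii) to the rank-$(p-2)$ subgroups of $E^{(i)}$ together with $v_p(\dim_k S^{(p^p)}) = 1$. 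The reverse inclusion --- that $u_\alpha$ acts freely on $S^{(p^p)}$ whenever $\alpha_i = 0$ and $\alpha_j \neq 0$ for all $j \neq i$ --- is the main obstacle, to be attacked by analyzing $S^{(p^p)} \downarrow_{E^{(i)}}$ through iterated branching and Specht filtrations, or by an explicit generic-Jordan-type computation on a polytabloid basis. Granting this, $f|_{x_i = 0}$ has zero set $\bigcup_{j \neq i}\{x_j = 0\}$; combined with $(\mathbb{F}_p^\times)^p \rtimes \mathfrak{S}_p$-invariance and a residual degree analysis, $f|_{x_i = 0}$ must be a scalar multiple of $\prod_{j \neq i} x_j^{n(p-1)}$, with scalar independent of $i$ by $\mathfrak{S}_p$-symmetry; rescale $f$ to fix this scalar to $1$. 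Then $f - \sum_i \prod_{j \neq i} x_j^{n(p-1)}$ vanishes on every coordinate hyperplane, hence is divisible by $x_1 \cdots x_p$, and $(\mathbb{F}_p^\times)^p$-invariance of this difference boosts divisibility to $(x_1 \cdots x_p)^{p-1}$, giving the claimed form with $\tilde f \in k[x_1,\ldots,x_p]^{(\mathbb{F}_p^\times)^p \rtimes \mathfrak{S}_p}$.
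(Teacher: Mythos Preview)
Your architecture matches the paper's --- principality from the UFD, normalizer invariance, and the hyperplane restrictions --- but the step you flag as the ``main obstacle'' is a genuine gap, and it is precisely the input the paper supplies. The paper does not attack it by iterated branching or an explicit polytabloid computation; instead, a single branching step gives $S^{(p^p)}\downarrow_{\mathfrak{S}_{p^2-1}}\cong S^\tau$ with $\tau=(p^{p-1},p-1)$, whose $p$-core is $(p,1^{p-1})$ and whose $p$-weight is $p-2$. The hook formula yields $|D_\tau|(\dim S^\tau)_p/|\mathfrak{S}_{p^2-1}|_p=1$, so Proposition~\ref{summary of properties of varieties}(iv) gives $V_{\mathfrak{S}_{p^2-1}}(S^\tau)=\res^\ast V_{E_{p-2}}(k)$, and then part~(v) pins down $V^\sharp_{E_{p-1}}(S^\tau)$ as exactly the union of the coordinate hyperplanes in $k^{p-1}$. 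This handles both inclusions at once; your proposed justification of $\supseteq$ via~(vii) fails, since applied to a rank-$(p-2)$ subgroup it only yields dimension $\geq p-3$, not the full $k^{p-2}$.

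The same branching computation also supplies the upper bound in (i), which you cannot defer to (ii) as written: if $\dim V^\sharp_{E_p}(S^{(p^p)})$ were $p$ then $W_{p-1}$ would be empty and there would be no nonzero $f$ to analyse, so your argument is circular. The paper instead uses $\dim V_{\mathfrak{S}_{p^2-1}}(S^{(p^p)})=\dim V_{\mathfrak{S}_{p^2-1}}(S^\tau)=p-2$ to rule out $V^\sharp_{E_p}(S^{(p^p)})=k^p$ directly. A smaller issue: $f$ defines $W_{p-1}$, not the full rank variety, so passing from your identity for $V^\sharp_{E^{(i)}}(S^{(p^p)})$ to the zero set of $f|_{x_i=0}$ needs an extra step (the paper's Lemma~\ref{variety of largest component}, using $\dim(W_{p-1}\cap V(x_i))=p-2$ together with the $\mathfrak{S}_p$-action) to conclude $W_{p-1}\cap V(x_i)=\bigcup_{j\neq i}V(x_j,x_i)$.
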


In Section \ref{not p by p}, we determine the varieties of a large
class of Specht modules that are not made up of $(p\times
p)$-blocks. We associate to any partition $\mu=(\mu_1,\ldots,\mu_s)$
of $np$ not more than $p$ parts with empty $p$-core a unique
partition $\Phi(\mu)$ of $np$ such that
$\Phi(\mu)=(n_1p,n_2p,\ldots,n_rp)$ and $r\leq s$.

\begin{hypo}\label{hypothesis} Suppose that $\mu$ is a partition of $np$ not more
than $p$ parts with empty $p$-core and one of the following
conditions hold.
\begin{enumerate}
\item [(H1)] The prime $p$ is odd, $n=p$ and $\Phi(\mu)$ is a $2$-part partition $(p^2-mp,mp)$ of $p^2$ for some $1\leq m<
p/2$.
\item [(H2)] The prime $p$ is odd and $\Phi(\mu)$ is a $2$-part partition $(np-\varepsilon p,\varepsilon p)$
such that $n\not\equiv 2(\mod p)$ and $\varepsilon\in\{1,2\}$.
\item [(H3)] $\Phi(\mu)=(np)$.
\item [(H4)] $p=2$ and $\Phi(\mu)$ is the partition $(2n-2,2)\neq (2,2)$ or $(2n-4,4)\neq (4,4)$.
\end{enumerate}
\end{hypo}

\begin{thm}\label{theorem not p by p} If $\mu$ is a partition satisfying Hypothesis
\ref{hypothesis}, then $V^\sharp_{E_n}(S^\mu)=V^\sharp_{E_n}(k)$. In
this case, the complexity of $S^\mu$ is $n$.
\end{thm}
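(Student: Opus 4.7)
The plan is to invoke the equivalence established earlier in Section~\ref{not p by p}, namely that for any partition $\mu$ of $np$ with at most $p$ parts and empty $p$-core,
\[
V^\sharp_{E_n}(S^\mu) = V^\sharp_{E_n}(k) \quad \Longleftrightarrow \quad V^\sharp_{E_n}(S^{\Phi(\mu)}) = V^\sharp_{E_n}(k).
\]
This reduces Theorem~\ref{theorem not p by p} to verifying the right-hand equality for each $\Phi(\mu)$ arising in the four hypotheses. The complexity claim then follows at once: $V^\sharp_{E_n}(k)$ has dimension $n$, so $\dim V_{\mathfrak{S}_{np}}(S^\mu) \ge n$ via restriction to $E_n$, while the matching upper bound comes from the defect-group analysis underlying Proposition~\ref{summary of symmetric groups representation}(iv).

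Case~(H3) is immediate because $\Phi(\mu)=(np)$ forces $S^{(np)}\cong k$, whose rank variety is trivially $V^\sharp_{E_n}(k)$. In the remaining cases $\Phi(\mu)=(ap,bp)$ is a two-part partition of $np$ with both parts divisible by $p$. The first strategy is to apply Proposition~\ref{summary of properties of varieties}(iii): it suffices to verify $p\nmid\dim_k S^{(ap,bp)}$. Using the two-part hook-length formula
\[
\dim_k S^{(ap,bp)} \;=\; \binom{np}{bp}\cdot\frac{(a-b)p+1}{ap+1}
\]
together with Lucas's theorem, this dimension is coprime to $p$ precisely when no carry occurs in the base-$p$ addition $ap+bp=np$; a direct check shows that this criterion is met by those subcases of (H2) and (H4) in which the units digit of $n$ in base $p$ is sufficiently large relative to $\varepsilon$.

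The main obstacle consists of the residual cases---in particular all of (H1) (where $n=p$ always forces $\binom{p^2}{mp}$ to be divisible by $p$ for $1\le m<p$) and the remaining congruence classes of (H2) and (H4). For these the approach is to work inside the permutation module $M^{(ap,bp)}$, whose Specht filtration has factors $S^{(np-s,s)}$ for $0\le s\le bp$ (Theorem~17.13 of \cite{GJ1}). Nakayama's Conjecture (Proposition~\ref{summary of symmetric groups representation}(ii)) isolates the block summand containing $S^{\Phi(\mu)}$, which collects exactly the Specht factors $S^{(np-sp,sp)}$ for $0\le s\le b$; by the two-part Young-module decomposition this summand decomposes as a direct sum of Young modules $Y^{(np-sp,sp)}$, $0\le s\le b$. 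Restricting to $E_n$, one then computes the generic Jordan type of each Young module summand and, via the additivity of generic Jordan type with respect to direct sums and tensor products (Proposition~4.7 of \cite{EFJPAS}) together with the $C_p$-extension data of Proposition~\ref{summary of symmetric groups representation}(iii), tracks non-projective Jordan blocks through the Specht filtration to conclude that at least one non-projective block survives in $S^{\Phi(\mu)}{\downarrow_{E_n}}$ under the arithmetic constraints imposed by (H1), (H2) and (H4). This bookkeeping of surviving non-projective blocks is the technical heart of the argument, and the restrictive numerical constraints $1\le m<p/2$, $\varepsilon\in\{1,2\}$ and the exclusion $n\not\equiv 2\pmod p$ are precisely what make it go through.
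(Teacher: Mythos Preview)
Your reduction to $\Phi(\mu)$ via Lemma~\ref{complementary sgjt} and your treatment of (H3) match the paper, and the dimension argument via Proposition~\ref{summary of properties of varieties}(iii) is a legitimate shortcut for those subcases of (H2) and (H4) where $\binom{n}{\varepsilon}\not\equiv 0\pmod p$; the paper does not use this, so for those subcases you have found a more elementary route.

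The gap is in the residual cases, which include all of (H1). Two concrete problems. First, the claim that the principal-block summand of $M^{(ap,bp)}$ is $\bigoplus_{s=0}^{b}Y^{(np-sp,sp)}$ is false in general: each Young module $Y^{(np-sp,sp)}$ carries $S^{(np)}$ as a Specht factor, so that direct sum would contain $S^{(np)}$ with multiplicity $b+1$, not $1$. Which Young modules actually occur is governed by Henke's $p$-containment criterion, and the answer depends on $n$. Second, and more seriously, even with the correct Young-module decomposition you have not said how to compute the generic Jordan type of any individual $Y^{(np-sp,sp)}$, nor how to propagate non-projective blocks through a filtration with $b+1$ principal-block Specht factors; Proposition~\ref{summary of symmetric groups representation}(iii) by itself gives far too much slack to force a surviving non-free block when several layers are stacked.

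The paper avoids this by working not with $M^{(ap,bp)}$ directly but with the \emph{difference} $M^{\eta(m+1)}\cong M^{\eta(m)}\oplus Q$ supplied by Corollaries~\ref{decomposition for p square} and~\ref{decomposition for multiple of p}. The point is that $Q$ has only \emph{two} principal-block Specht factors, $S^{\lambda(m)}$ and $S^{\eta(m+1)}$, and its stable generic Jordan type is $(1^c)$ by Lemma~\ref{generic jordan type of permutation modules}. Combined with the complementary relation between $S^{\eta(m)}$ and $S^{\lambda(m)}$, this lets one prove by induction on $m$ the explicit inequality $\sum_{i=1}^{m} n_{\eta(m)}(i)\ge p-2$, whose base case is Lemma~\ref{generic Jordan type case 1}. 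This inductive mechanism, together with the base-case lemma, is the technical heart of the argument and is missing from your sketch.
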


\section{The Variety for the Specht Module $S^{(p^p)}$}\label{p by
p}

\begin{thm} Suppose that the Specht module $S^\mu$ corresponding to
a partition $\mu$ of $n$ lies inside a block with abelian defect
$D_\mu$. We have
$V_{\mathfrak{S}_n}(S^\mu)=\res^\ast_{\mathfrak{S}_n,D_\mu}V_{D_\mu}(k)$
and the complexity of the Specht module $S^\mu$ is exactly the
$p$-weight of $\mu$.
\end{thm}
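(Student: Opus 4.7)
The plan is to apply Proposition~\ref{summary of properties of varieties}~(iv) with $G=\mathfrak{S}_n$, $M=S^\mu$, and $D=D_\mu$. First I would pin down the structure of $D_\mu$: it is a Sylow $p$-subgroup of $\mathfrak{S}_{m_\mu p}$, and the iterated wreath-product description of Sylow $p$-subgroups of symmetric groups shows that $D_\mu$ is abelian if and only if $m_\mu p < p^2$, i.e.\ $m_\mu < p$, in which case $D_\mu \cong (C_p)^{m_\mu}$ is elementary abelian of rank $m_\mu$. Thus the abelian defect hypothesis is really the hypothesis $m_\mu < p$, and I may work under this assumption throughout.

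Unconditionally, Proposition~\ref{summary of properties of varieties}~(iv) gives $V_{\mathfrak{S}_n}(S^\mu) = \res^\ast_{\mathfrak{S}_n,D_\mu} V_{D_\mu}(S^\mu)$. To upgrade $V_{D_\mu}(S^\mu)$ to $V_{D_\mu}(k)$ it suffices to verify the divisibility hypothesis $\gcd(|D_\mu| \dim_k S^\mu / p^a, p) = 1$, where $p^a$ is the $p$-part of $n!$. Since $|D_\mu| = p^{m_\mu}$, this reduces to showing $v_p(\dim_k S^\mu) = a - m_\mu$, equivalently, via the hook length formula, $v_p\bigl(\prod_{i,j \in [\mu]} h_{ij}\bigr) = m_\mu$.

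The key ingredient here is the classical James--Kerber correspondence between hooks of $\mu$ and hooks of the $p$-quotient $(\mu^{(0)},\ldots,\mu^{(p-1)})$: the hooks of $\mu$ of length divisible by $p$ are in natural bijection with all hooks of the $\mu^{(i)}$ via $h \mapsto h/p$, so $\prod_{p \mid h_{ij}} h_{ij} = p^{m_\mu} \prod_i \prod h(\mu^{(i)})$. Since $\sum_i |\mu^{(i)}| = m_\mu < p$, every hook length of every $\mu^{(i)}$ is $<p$ and hence coprime to $p$, while hooks of $\mu$ not divisible by $p$ are trivially coprime to $p$. This yields $v_p\bigl(\prod h_{ij}\bigr) = m_\mu$ and hence $V_{\mathfrak{S}_n}(S^\mu) = \res^\ast_{\mathfrak{S}_n,D_\mu} V_{D_\mu}(k)$; the complexity statement $\dim V_{\mathfrak{S}_n}(S^\mu) = m_\mu$ then follows at once, since $\res^\ast$ is a finite map and $D_\mu$ is elementary abelian of rank $m_\mu$. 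The only genuinely non-formal step I anticipate is the hook-arithmetic computation $v_p(\dim_k S^\mu) = a - m_\mu$; everything else is a direct invocation of Proposition~\ref{summary of properties of varieties}~(iv) together with standard Sylow combinatorics in $\mathfrak{S}_{m_\mu p}$.
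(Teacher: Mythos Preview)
Your proposal is correct and follows the same overall strategy as the paper: reduce to the divisibility criterion in Proposition~\ref{summary of properties of varieties}~(iv) via the hook length formula, and note that $D_\mu\cong (C_p)^{m_\mu}$ under the abelian-defect hypothesis.

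The one genuine difference is in how you establish $v_p\bigl(\prod h_{ij}\bigr)=m_\mu$. You compute it directly via the James--Kerber bijection between $p$-divisible hooks of $\mu$ and all hooks of the $p$-quotient, using $m_\mu<p$ to see that the latter have length $<p$. The paper instead uses a squeeze: the quantity $\frac{|D_\mu|}{p^a}(\dim_k S^\mu)_p=\frac{p^{m_\mu}}{\prod (h_{ij})_p}$ is an integer by the block-theoretic divisibility in Proposition~\ref{summary of properties of varieties}~(iv), while the bare count of $p$-divisible hooks (stated as equal to $m_\mu$ in Section~\ref{notations}) forces it to be $\leq 1$, hence equal to $1$. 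The paper's argument is a touch more economical --- it never needs the $p$-quotient or the inequality $m_\mu<p$ explicitly in that step --- whereas yours is more self-contained and does not lean on the integrality statement for the squeeze.
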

\begin{proof} Let $n!=p^ab$ with $\gcd(a,b)=1$ and $m_\mu$ be the $p$-weight of $\mu$. Since the number of hook
lengths of $[\mu]$ divisible by $p$ is at least as many as $m_\mu$,
we have
$$\mathbb{Z}\ni\frac{|D_\mu|}{p^a}(\dim_k S^\mu)_p=\frac{p^{m_\mu}}{p^a}\frac{p^a}{\prod_{(i,j)\in
[\mu]}(h_{ij})_p}=1$$ By Proposition \ref{summary of properties of
varieties} (iv),
$V_{\mathfrak{S}_n}(S^\mu)=\res^\ast_{\mathfrak{S}_n,D_\mu}V_{D_\mu}(k)$.
The complexity of the $S^\mu$ is exactly the $p$-rank of the defect
group $D_\mu$, i.e., $m_\mu$.
\end{proof}

For $p=2$, let $F$ be the maximal abelian subgroup of
$\mathfrak{S}_4$ generated by $(12)(34)$ and $(13)(24)$. The module
$S^{(2^2)}{\downarrow_F}$ is isomorphic to $k\oplus k$, so $\dim
V_{\mathfrak{S}_4}(S^{(2^2)})=2$. For the rest of Section \ref{p by
p}, $p$ is an odd prime.

\subsection{Some vanishing ideals}

Given the symmetric group $\mathfrak{S}_m$ of degree $m$, the group
$\left (\mathbb{F}_p^\times\right )^m\rtimes \mathfrak{S}_m$ is
defined by the group actions
$(\beta,\sigma)(\beta',\sigma')=(\beta\cdot\sigma(\beta'),\sigma\sigma')$
where
$\sigma(\beta')=(\beta'_{\sigma^{-1}(1)},\ldots,\beta'_{\sigma^{-1}(m)})$
for all $\sigma,\sigma'\in\mathfrak{S}_m$ and
$\beta=(\beta_1,\ldots,\beta_m),\beta'=(\beta'_1,\ldots,\beta'_m)\in(\mathbb{F}_p^\times)^m$.
This group acts on the polynomial ring $k[x_1,\ldots,x_m]$ via
$$(\beta,\sigma){x_i}:=\beta_{\sigma(i)}x_{\sigma(i)}$$ for all
$1\leq i\leq m$ and $(\beta,\sigma)\in \left
(\mathbb{F}_p^\times\right )^m\rtimes \mathfrak{S}_m$. We think of
the action in two stages, first by $(\mathbb{F}_p^\times)^m$ and
then followed by $\mathfrak{S}_m$.
%
%
%
%
\begin{lem}\label{some factors of f}
Let $p$ be an odd prime, $m\geq 3$, $f\in k[x_1,\ldots,x_m]^{\left
(\mathbb{F}_p^\times \right )^m\rtimes A_m}$ and $x_i$ be a factor
of $f$ for some $1\leq i\leq m$. Then ${x_1}^{p-1}\ldots
{x_m}^{p-1}$ is a factor of $f$.
\end{lem}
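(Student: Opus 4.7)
The plan is to exploit the two factors of the semidirect product separately: the alternating group $A_m$ to spread divisibility by $x_i$ to all variables, and the diagonal torus $(\mathbb{F}_p^\times)^m$ to force each exponent in $f$ to be a multiple of $p-1$.

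First I would unpack the actions. For $\sigma \in A_m$ (with $\beta = \mathbf{1}$), the definition gives $\sigma \cdot x_i = x_{\sigma(i)}$; for $\beta \in (\mathbb{F}_p^\times)^m$ (with $\sigma = e$), the definition gives $\beta \cdot x_i = \beta_i x_i$. The hypothesis $m \geq 3$ ensures that $A_m$ acts transitively on $\{1,\ldots,m\}$, so for every $j$ there exists $\sigma \in A_m$ with $\sigma(i) = j$. Writing $f = x_i g$ with $g \in k[x_1,\ldots,x_m]$ and applying $\sigma$, we get
\[
f \;=\; \sigma\cdot f \;=\; x_{\sigma(i)}\,(\sigma\cdot g)\;=\;x_j\,(\sigma\cdot g),
\]
so $x_j$ divides $f$ for every $j$. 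Since the $x_j$'s are pairwise coprime in the polynomial ring, the product $x_1 x_2 \cdots x_m$ divides $f$.

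Next I would use the torus action. Writing $f = \sum_\alpha c_\alpha x^\alpha$ and acting by $\beta \in (\mathbb{F}_p^\times)^m$, invariance gives
\[
\sum_\alpha c_\alpha \beta^\alpha x^\alpha \;=\; \sum_\alpha c_\alpha x^\alpha,
\]
so $\beta^\alpha = \prod_i \beta_i^{\alpha_i} = 1$ for every $\beta \in (\mathbb{F}_p^\times)^m$ and every $\alpha$ with $c_\alpha \neq 0$. Specialising one coordinate at a time to a generator of the cyclic group $\mathbb{F}_p^\times$ of order $p-1$, we conclude that $\alpha_i \equiv 0 \pmod{p-1}$ for every $i$ and every monomial $x^\alpha$ occurring in $f$.

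Combining the two conclusions: every variable $x_j$ appears in every monomial of $f$ with exponent at least $1$ and at the same time divisible by $p-1$, hence at least $p-1$. Therefore $x_1^{p-1}x_2^{p-1}\cdots x_m^{p-1}$ divides $f$, as required. The only subtle point is the use of $m\geq 3$ to ensure transitivity of $A_m$, but otherwise the argument is a clean separation into the permutation part and the scalar part of the action; no further computation is needed.
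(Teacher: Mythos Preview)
Your proof is correct and follows essentially the same approach as the paper's: both arguments separate the invariance into the torus part (forcing all exponents to be multiples of $p-1$, i.e.\ $f\in k[x_1^{p-1},\ldots,x_m^{p-1}]$) and the $A_m$ part (using transitivity for $m\geq 3$ to spread divisibility to all variables), then combine via pairwise coprimality. The only cosmetic difference is the order---the paper applies the torus first and then $A_m$, while you do the reverse---but the content is identical.
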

\begin{proof} Since $f\in \left (k[x_1,\ldots,x_m]^{\left (\mathbb{F}_p^\times
\right )^m}\right )^{A_m}=k[{x_1}^{p-1},\ldots,{x_m}^{p-1}]^{A_m}$,
it follows that if $x_i$ divides $f$ then ${x_i}^{p-1}$ divides $f$.
As $f$ is invariant under the action of $A_m$, all ${x_j}^{p-1}$'s
divide $f$. All factors ${x_j}^{p-1}$ are pairwise coprime, so the
product divides $f$.
\end{proof}

\begin{lem}\label{f is fixed by some subgroup}
Suppose that $m\geq 3$, $G=\left (\mathbb{F}_p^\times\right
)^m\rtimes \mathfrak{S}_m$ and $f$ is a non-zero polynomial in
$k[x_1,\ldots,x_m]$ such that the ideal $\langle f\rangle$ generated
by $f$ satisfies the property $G\langle f\rangle=\langle f\rangle$,
i.e., for all $g\in G$, we have $gf\in \langle f\rangle$. Then $f$
is either fixed by the subgroup $\left (\mathbb{F}_p^\times\right
)^m\rtimes A_m$ of $G$ or divisible by $x_1\ldots x_m$. In the first
of these cases, $f$ is either fixed or negated by the action of $G$.
\end{lem}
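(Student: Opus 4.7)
The strategy is to convert the $G$-invariance of the ideal $\langle f\rangle$ into a statement about characters of $G$, and then classify the characters that can occur. Because $k[x_1,\ldots,x_m]$ is a unique factorisation domain, for each $g\in G$ the polynomials $gf$ and $f$ generate the same principal ideal, so they differ by a unit, i.e.\ by a nonzero scalar $\chi(g)\in k^\times$. A direct computation shows that $\chi:G\to k^\times$ is a group homomorphism, and the lemma reduces to pinning down which characters $\chi$ can occur.

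Restrict $\chi$ to the abelian subgroup $T:=(\mathbb{F}_p^\times)^m$. Any character of $T$ has the form $(\beta_1,\ldots,\beta_m)\mapsto \prod_i \beta_i^{r_i}$ for integers $r_i$ modulo $p-1$. Since the $\mathfrak{S}_m$-conjugation inside $G$ permutes the coordinates of $T$ and $\chi$ is a homomorphism, we must have $r_1=\cdots=r_m=:r$. Hence $\chi|_T(\beta)=\bigl(\prod_i \beta_i\bigr)^r$ for a single parameter $r\in\mathbb{Z}/(p-1)$.

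The two alternatives of the lemma then correspond to whether $r\equiv 0\pmod{p-1}$ or not. If $r\equiv 0$, then $\chi$ is trivial on $T$ and therefore factors through $\mathfrak{S}_m$; the only one-dimensional characters of $\mathfrak{S}_m$ over $k$ (using that $p$ is odd so $-1\neq 1$ in $k$) are the trivial character and the sign, both of which take values in $\{\pm 1\}$ and restrict trivially to $A_m$. This yields the first alternative: $f$ is fixed by $T\rtimes A_m$ and either fixed or negated by every element of $G$. If instead $r\not\equiv 0\pmod{p-1}$, I argue directly on monomials: for each monomial $c_{\mathbf{a}}x_1^{a_1}\cdots x_m^{a_m}$ appearing in $f$, the identity $(\beta,e)f=\chi(\beta,e)f$ forces $\beta_1^{a_1}\cdots\beta_m^{a_m}=\bigl(\prod_i\beta_i\bigr)^r$ for every $\beta\in T$, and hence $a_i\equiv r\pmod{p-1}$ for each $i$. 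Since $r\not\equiv 0$ and $a_i\geq 0$, each $a_i\geq 1$, so $x_1\cdots x_m$ divides every monomial of $f$ and therefore divides $f$.

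The only delicate step is the $\mathfrak{S}_m$-invariance argument forcing $r_1=\cdots=r_m$; once that single parameter $r$ is isolated, both conclusions of the lemma drop out from elementary monomial bookkeeping, so there is no serious obstacle beyond setting up the character.
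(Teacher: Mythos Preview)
Your proof is correct and somewhat cleaner than the paper's. Both arguments begin by observing that $gf$ is a scalar multiple $\chi(g)f$ for every $g\in G$ (the paper uses a degree comparison, you use that the two principal ideals coincide). From there the organization diverges. The paper argues directly on monomials: it first shows $\chi(\mathbf{1},\sigma)\in\{\pm 1\}$ for transpositions, then splits according to whether some monomial of $f$ has a zero exponent in some variable $x_i$; if so, it reads off $\chi(\beta(i),1)=1$ from that monomial and uses the transitivity of $A_m$ on $\{1,\ldots,m\}$ (this is where the hypothesis $m\ge 3$ is invoked) to spread this to all coordinates. You instead classify the character $\chi$ abstractly: restrict to $T=(\mathbb{F}_p^\times)^m$, use $\mathfrak{S}_m$-conjugation invariance of $\chi$ to force a single exponent $r$, and then split on $r\equiv 0$ versus $r\not\equiv 0\pmod{p-1}$. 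Your route packages the dichotomy more transparently and in fact does not need $A_m$-transitivity, so the hypothesis $m\ge 3$ is not essential to your argument; the paper's route is more hands-on but reaches the same conclusion.
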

\begin{proof} Write $R=k[x_1,\ldots,x_m]$ and $f=\sum a_{n_1\ldots
n_m}{x_1}^{n_1}\ldots{x_m}^{n_m}$ with $a_{n_1\ldots n_m}\in k$.
Suppose that for each $(\beta,\sigma)\in G$, we have
${(\beta,\sigma)}f=hf$ for some polynomial $h\in R$. Since the
action of $(\beta,\sigma)$ on each monomial of $f$ is multiplication
by some $\beta_i$'s and permuting $x_j$'s, the highest degree of a
monomial appearing in $f$ is at least as large as the highest degree
of a monomial appearing in $hf$. So, $h=c(\beta,\sigma)$ lies in
$k$. Let $\mathbf{1}=(1,\ldots,1)$ in $(\mathbb{F}_p^\times)^m$. Let
$\beta(i)$ be the element in $\left (\mathbb{F}_p^\times\right )^m$
such that its $i$th coordinate is $\beta_i$ and $1$ elsewhere. It
is clear that $c(\mathbf{1},\sigma)^{-1}=c(\mathbf{1},\sigma^{-1})$
and
$$c(\beta,\sigma)=c(\beta(1),1)c(\beta(2),1)\ldots c(\beta(m),1)c(\mathbf{1},\sigma)$$ for all $\beta\in \left
(\mathbb{F}_p^\times\right )^m$ and $\sigma \in \mathfrak{S}_m$.

It suffices to examine the action of transpositions on $f$ to
determine $c(\mathbf{1},\sigma)$. For any transposition $\sigma$,
$f=\sigma^2 f={c(\mathbf{1},\sigma)^2}f$, so
$c(\mathbf{1},\sigma)\in \{\pm 1\}$. This shows that
$c(\mathbf{1},\xi)=1$ for all $\xi\in A_m$. Suppose that there
exists some monomial \[a_{n_1\ldots
n_m}{x_1}^{n_1}\ldots{x_m}^{n_m}\] involved in $f$ with $n_i=0$ for
some $1\leq i\leq m$. By the action of $\beta(i)$ on $f$ we have
$a_{n_1\ldots n_m}=c(\beta(i),1)a_{n_1\ldots n_m}$, i.e.,
$c(\beta(i),1)=1$. For any $1\leq j\leq m$, let $\sigma \in A_m$
such that $\tau (i)=j$, then $\tau f=f$ gives $c(\beta(j),1)=1$. So
$(\mathbb{F}_p^\times)^m\rtimes A_m$ acts trivially on $f$ in this
case. Otherwise, every monomial involved in $f$ is divisible by
$x_1$ and so $f$ is divisible by $x_1\ldots x_m$.
\end{proof}

For any fixed $1\leq i\leq m$, define $x_1\ldots \widehat{x_i}\ldots
x_m$ as the product of all $x_j$'s where $1\leq j\neq i\leq m$. We
write $I_i$ for the radical ideal of the polynomial ring
$k[x_1,\ldots,x_m]$ generated by $x_1\ldots \widehat{x_i}\ldots x_m$
and $x_i$ .

\begin{lem}\label{vanishing ideal of union} For $m\geq 2$, the vanishing
ideal of the union of the varieties $V(I_i)$ where $1\leq i\leq m$ is $(
\{x_1\ldots\widehat{x_i}\ldots x_m\,|\, 1\leq i\leq m\})$.
\end{lem}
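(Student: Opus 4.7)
The plan is to identify the variety $Z := \bigcup_{i=1}^{m} V(I_i)$ as a union of codimension-two coordinate subspaces of $k^m$, thereby reducing the statement to computing an intersection of prime monomial ideals, and then to read off the monomials in that intersection.

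First I would unwind the definition of $I_i$. Since $I_i$ is the radical of the ideal $(x_1\cdots\widehat{x_i}\cdots x_m,\, x_i)$, we have
\[
V(I_i) = V(x_i) \cap V(x_1\cdots\widehat{x_i}\cdots x_m) = \bigcup_{j\neq i} V(x_i,x_j),
\]
and taking the union over $i$ yields
\[
Z \;=\; \bigcup_{1\leq a<b\leq m} V(x_a,x_b),
\]
which geometrically is the set of points in $k^m$ with at least two vanishing coordinates. Since each coordinate subspace $V(x_a,x_b)$ is irreducible with vanishing ideal the prime $(x_a,x_b)$, it follows that
\[
I(Z) \;=\; \bigcap_{1\leq a<b\leq m}(x_a,x_b).
\]

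Next I would compute this intersection monomially. Each $(x_a,x_b)$ is a monomial ideal, so the intersection is a monomial ideal and it suffices to determine which monomials belong to it. A monomial $x_1^{n_1}\cdots x_m^{n_m}$ lies in $(x_a,x_b)$ iff $n_a\geq 1$ or $n_b\geq 1$; hence it lies in $I(Z)$ iff for every pair $(a,b)$ at least one of $n_a,n_b$ is positive, equivalently iff at most one of the exponents $n_1,\ldots,n_m$ vanishes. Such monomials are precisely the multiples of some $x_1\cdots\widehat{x_i}\cdots x_m$ (take $i$ to be the unique index with $n_i=0$, or any index if every $n_j\geq 1$), giving
\[
I(Z) \;=\; \bigl(\{x_1\cdots\widehat{x_i}\cdots x_m : 1\leq i\leq m\}\bigr).
\]

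I do not anticipate a substantial obstacle. The only step requiring care is the identification of each $V(I_i)$ as a union of the two-variable coordinate subspaces, after which the conclusion is a routine monomial calculation streamlined by the fact that all the component ideals are generated by variables.
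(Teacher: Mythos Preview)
Your proof is correct and follows essentially the same geometric identification as the paper: both recognize $\bigcup_i V(I_i)$ as the locus of points in $k^m$ with at least two vanishing coordinates. The only difference is in the final step. The paper observes directly that this locus equals $V(J)$ for $J=(\{x_1\cdots\widehat{x_i}\cdots x_m\})$, asserts that $J$ is radical, and invokes the Nullstellensatz. You instead write $I(Z)=\bigcap_{a<b}(x_a,x_b)$ and compute this intersection monomially. Your route is slightly longer but has the advantage of \emph{proving} that $J$ is radical (since you exhibit it as an intersection of primes), whereas the paper leaves that verification implicit. Either way the argument is routine.
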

\begin{proof} The algebraic variety given by the union of all $V(I_i)$'s
consists of all planes at least two coordinates having value 0, which
is precisely the algebraic variety defined by the ideal $J$
generated by $x_1\ldots\widehat{x_i}\ldots x_m$ for all $1\leq i\leq
m$. Furthermore, $J$ is a radical ideal, so both sides coincide
using Hilbert's Nullstellensatz.
\end{proof}

\begin{prop}\label{polynomial defines certain variety} Let $m\geq 2$
and $I$ be a radical ideal of $k[x_1,\ldots,x_m]$ generated by a
single polynomial $f$ such that $V(f)\cap
V(x_i)=V(x_1\ldots\widehat{x_i}\ldots x_m,x_i)$ for any $1\leq i\leq
m$. Then
$$f(x_1,\ldots,x_m)=x_1\ldots
x_m\widetilde{f}+\sum_{i=1}^ma_i{x_1}^{n_{1i}}\ldots
{x_m}^{n_{mi}}$$ such that $\widetilde{f}\in k[x_1,\ldots,x_m]$ and
\begin{enumerate}
\item [$\mathrm{(i)}$] $n_{ii}=0$ for all $1\leq i\leq m$,
\item [$\mathrm{(ii)}$] $0\neq a_i\in k$ for all $1\leq i\leq m$,
\item [$\mathrm{(iii)}$] $n_{ij}>0$ for all $i\neq j$.
\end{enumerate}
\end{prop}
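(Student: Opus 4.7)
The plan is to extract the structure of $f$ by restricting to each coordinate hyperplane, deducing the form of the restriction via the Nullstellensatz, and then reassembling the information monomial by monomial. For each index $i$, write $\overline{f}_i := f(x_1,\ldots,x_{i-1},0,x_{i+1},\ldots,x_m)$, viewed as an element of $k[x_1,\ldots,\widehat{x_i},\ldots,x_m]$. Under the identification of the hyperplane $V(x_i)\subset k^m$ with affine $(m-1)$-space, the hypothesis translates to the equality of zero sets $V(\overline{f}_i)=V(x_1\ldots\widehat{x_i}\ldots x_m)$ inside $k^{m-1}$.

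The first concrete step is to apply the Nullstellensatz. Since $x_1\ldots\widehat{x_i}\ldots x_m$ is squarefree, the ideal it generates is already radical, so $\sqrt{(\overline{f}_i)}=(x_1\ldots\widehat{x_i}\ldots x_m)$. Unique factorization in the polynomial ring then forces the distinct irreducible factors of $\overline{f}_i$ to be precisely $\{x_j : j\neq i\}$, giving
$$\overline{f}_i = a_i\prod_{j\neq i}x_j^{n_{ji}}$$
for some $a_i\in k^{\times}$ and exponents $n_{ji}\geq 1$. Two degenerate cases must be excluded along the way: if $\overline{f}_i=0$ then $x_i\mid f$ and $V(f)\cap V(x_i)=V(x_i)$, which is strictly larger than the prescribed set $V(x_1\ldots\widehat{x_i}\ldots x_m,x_i)$ whenever $m\geq 2$; and if $\overline{f}_i$ is a nonzero constant then $V(\overline{f}_i)=\emptyset$ whereas $V(x_1\ldots\widehat{x_i}\ldots x_m)$ is a nonempty union of coordinate hyperplanes in $k^{m-1}$.

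The decomposition of $f$ is then obtained by a monomial bookkeeping argument. Every monomial of $f$ is either divisible by $x_1\ldots x_m$, in which case it is absorbed into the term $x_1\ldots x_m\widetilde{f}$, or it omits some variable $x_i$, in which case it survives into $\overline{f}_i$. Since $\overline{f}_i$ is a single monomial $a_i\prod_{j\neq i}x_j^{n_{ji}}$ and distinct monomials are linearly independent, the unique monomial of $f$ omitting $x_i$ must equal exactly this one; moreover this monomial already contains every $x_j$ with $j\neq i$, so it omits only the single variable $x_i$, which rules out any ambiguity or double counting between different indices $i$. This yields precisely the asserted form, with $n_{ii}=0$, $n_{ji}>0$ for $j\neq i$, and $a_i\neq 0$. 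The only delicate point is the linear-independence step ensuring that the monomials of $f$ omitting $x_i$ cannot collapse to fewer terms, but this is automatic once one observes that each contributes independently to $\overline{f}_i$; no serious obstacle is anticipated beyond keeping this bookkeeping straight.
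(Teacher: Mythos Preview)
Your argument is correct and is somewhat more direct than the paper's. The paper first invokes Lemma~\ref{vanishing ideal of union} to place $f$ in the ideal generated by the products $x_1\ldots\widehat{x_i}\ldots x_m$, then rewrites $f$ in an intermediate form $x_1\ldots x_m\widetilde{f}+\sum_i a_i\,{x_1}^{n_{1i}}\ldots {x_m}^{n_{mi}}$ in which the $a_i$ are at first allowed to be polynomials (independent of $x_i$ and not divisible by any $x_j$), and finally uses the Nullstellensatz together with a divisibility computation inside the ideal $(a_i,x_i)$ to force each $a_i$ into $k^\times$. You instead restrict to the hyperplane $x_i=0$ at the outset, apply the Nullstellensatz in the smaller ring $k[x_1,\ldots,\widehat{x_i},\ldots,x_m]$ to obtain $\sqrt{(\overline f_i)}=(x_1\ldots\widehat{x_i}\ldots x_m)$, and then read off from unique factorization that $\overline f_i$ is a single monomial of the required shape. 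This collapses the paper's rewriting step and its subsequent divisibility argument into one stroke, after which the monomial bookkeeping you describe is immediate. Both proofs ultimately rest on the Nullstellensatz, but yours bypasses Lemma~\ref{vanishing ideal of union} entirely and makes the role of unique factorization explicit.
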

\begin{proof} The given hypothesis implies $\bigcup V\left
(x_1\ldots\widehat{x_i}\ldots x_{m},x_i\right ) \subseteq V(f)$. By
Lemma \ref{vanishing ideal of union}, $f$ lies inside the ideal
$(\{x_1\ldots\widehat{x_i}\ldots x_m\,|\,1\leq i\leq m\})$, i.e.,
$$f(x_1,\ldots,x_m)=\sum_{i=1}^m f_ix_1\ldots\widehat{x_i}\ldots
x_m$$ for some $f_1,\ldots, f_m\in k[x_1,\ldots,x_m]$. It is not
difficult to see that we can rewrite $f$ as follows
$$f(x_1,\ldots,x_m)=x_1\ldots
x_m\widetilde{f}+\sum_{i=1}^ma_i(x_1,\ldots,x_m){x_1}^{n_{1i}}\ldots
{x_m}^{n_{mi}}\eqno(\ast)$$ such that
\begin{enumerate}
\item [(i)] $n_{ij}>0$ if $j\neq i$ and $n_{ii}=0$ for all $1\leq i\leq m$,

\item [(ii)] $\widetilde{f}, a_i\in k[x_1,\ldots,x_m]$ for each $1\leq i\leq m$,

\item [(iii)]  for all $1\leq i,j\leq m$, $x_j$ does not divide $a_i$ and $a_i$ is independent of the variable $x_i$.

\end{enumerate} For each $1\leq i\leq m$, the equation
$ f(x_1,\ldots,x_m)\left |_{x_i=0}\right
.=a_i{x_1}^{n_{1i}}\ldots{x_m}^{n_{mi}}$ implies
$$V(a_i,x_i)=V(a_i)\cap V(x_i)\subseteq V(f)\cap
V(x_i)=V(x_1\ldots\widehat{x_i}\ldots x_m,x_i)$$ By Hilbert's
Nullstellensatz, we have $(x_1\ldots\widehat{x_i}\ldots x_m,
x_i)\subseteq (a_i,x_i)$. Suppose that $x_1\ldots \widehat{x_i}\ldots
x_m=a_iw_i+x_iv_i$ for some $w_i,v_i\in k[x_1,\ldots, x_m]$. We write
$w_i=u_i+x_iu_i'$ such that $u_i$ is independent of the variable
$x_i$. The equation $$x_1\ldots \widehat{x_i}\ldots
x_m-a_iu_i=a_ix_iu_i'+x_iv_i$$ implies that $a_ix_iu_i'+x_iv_i=0$ as
the left-hand side is independent of $x_i$. So we have
$x_1\ldots\widehat{x_i}\ldots x_m=a_iu_i$.

Since $x_j$ does not divide $a_i$ for all $j\neq i$, we have
$u_i/x_1\ldots\widehat{x_i}\ldots x_m$ is a polynomial. Comparing
degrees, we have $a_i\in k$. Note that $a_i\neq 0$; otherwise,
\[(x_1\ldots\widehat{x_i}\ldots x_m,x_i)\subseteq (x_i)\] Now we have
all the desired properties.
\end{proof}

\subsection{Proof of Theorem \ref{theorem p by p}}

Let $E=E_p$. Now we are ready to prove Theorem \ref{theorem
p by p} (i).

\begin{proof}[Proof of Theorem \ref{theorem p by p}(i)] Recall that
$\mu=(p^p)$. By the Branching Theorem, we have
$S^\mu{\downarrow_{\mathfrak{S}_{p^2-1}}}\cong S^\tau$ where
$\tau=(p^{p-1},p-1)$. Note that $\tau$ has $p$-core
$\widetilde{\tau}=(p,1^{p-1})$ and the corresponding block has
defect group $D_\tau=E_{p-2}\leq \mathfrak{S}_{p^2-1}$. Using the
hook formula
$$\frac{|D_\tau|(\dim_k
S^\tau)_p}{|\mathfrak{S}_{p^2-1}|_p}=\frac{p^{p-2}p}{p^{p-1}}=1$$ so
$V_{\mathfrak{S}_{p^2-1}}(S^\mu)=V_{\mathfrak{S}_{p^2-1}}(S^\tau)=\res^\ast_{\mathfrak{S}_{p^2-1},E_{p-2}}
V_{E_{p-2}}(k)$ by Proposition \ref{summary of properties of
varieties} (iv). This also shows that $\dim
V_{\mathfrak{S}_{p^2-1}}(S^\mu)=p-2$ and so $\dim
V_{\mathfrak{S}_{p^2}}(S^\mu)\leq p-1$: if $\dim
V_{\mathfrak{S}_{p^2}}(S^\mu)=p$, then $V_E(S^\mu)=V_E(k)$ and so
$\dim V_{\mathfrak{S}_{p^2-1}}(S^\mu)=\dim V_{E_{p-1}}(S^\mu)=p-1$.
On the other hand, let $r=\dim V_E(S^\mu)\leq \dim
V_{\mathfrak{S}_{p^2}}(S^\mu)$, using Proposition \ref{summary of
properties of varieties} (vii), $p^{p-r}$ divides $(\dim_k
S^\mu)_p=p$, this implies $p-1\leq r$. So $\dim
V_{\mathfrak{S}_{p^2}}(S^\mu)=p-1$.
\end{proof}

The quotient group
$G=N_{\mathfrak{S}_{p^2}}(E)/C_{\mathfrak{S}_{p^2}}(E)\cong
(\mathbb{F}_p^\times)^p\rtimes \mathfrak{S}_p$ acts on $k^p$ via
$$(\beta,\sigma)\alpha=(\beta_{\sigma(1)}\alpha_{\sigma(1)},\ldots,\beta_{\sigma(p)}\alpha_{\sigma(p)})$$
for all $\alpha=(\alpha_1,\ldots,\alpha_p)\in k^p$. Let $J$ be the
Jacobson radical of $kE$. For each $1\leq i\leq p$ and
$b\in\mathbb{F}_p^\times$, we have ${g_i}^b-1\equiv b(g_i-1)(\mod
J^2)$. Lemma 6.4 of \cite{JC1} shows that $G$ acts on the rank
variety $V_E^\sharp(M)$ of a $kE$-module $M$ and hence on the
vanishing ideal $I(V_E^\sharp(M))$ of $V_E^\sharp(M)$ where the
action is given by
$(\beta,\sigma)x_i=\beta_{\sigma(i)}x_{\sigma(i)}$.

\begin{lem}\label{variety for restricted p by p 2}
Let $\tau=(p^{p-1},p-1)$ and $E_{p-1}\leq \mathfrak{S}_{p^2-1}$.
Then the rank variety $V_{E_{p-1}}^\sharp(S^{\tau})\subseteq
k^{p-1}$ is the union of all hyperplanes $V(x_i)\subseteq k^{p-1}$
where $1\leq i\leq p-1$.
\end{lem}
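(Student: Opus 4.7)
My plan is to show that $V^\sharp_{E_{p-1}}(S^\tau)$ equals the union of the $p-1$ coordinate hyperplanes coming from the rank $p-2$ ``coordinate'' subgroups $H_i := \langle g_j : j \neq i\rangle \leq E_{p-1}$, for $1 \leq i \leq p-1$. Under the identification $V^\sharp_{E_{p-1}}(k) = k^{p-1}$, the map $\res^\ast_{E_{p-1}, H_i}$ carries $V^\sharp_{H_i}(k)$ onto the hyperplane $V(x_i)$. First I would observe that each $H_i$ is $\mathfrak{S}_{p^2-1}$-conjugate to the defect group $D_\tau = E_{p-2}$ of the block of $S^\tau$ from the proof of Theorem 4.4(i), because both groups act as $p-2$ disjoint $p$-cycles on $p(p-2)$ letters and fix the remaining $2p-1$ letters.

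For the inclusion $\bigcup_i V(x_i) \subseteq V^\sharp_{E_{p-1}}(S^\tau)$, the coprimality computation in the proof of Theorem 4.4(i) combined with Proposition 2.1(iv) gives $V_{\mathfrak{S}_{p^2-1}}(S^\tau) = \res^\ast_{\mathfrak{S}_{p^2-1}, D_\tau}V_{D_\tau}(k)$. Applying Proposition 2.1(v) to $D_\tau \leq \mathfrak{S}_{p^2-1}$ then gives $V_{D_\tau}(S^\tau) = (\res^\ast_{\mathfrak{S}_{p^2-1}, D_\tau})^{-1}V_{\mathfrak{S}_{p^2-1}}(S^\tau) \supseteq V_{D_\tau}(k)$, whence $V_{D_\tau}(S^\tau) = V_{D_\tau}(k)$. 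Conjugation in $\mathfrak{S}_{p^2-1}$ transfers this to $V^\sharp_{H_i}(S^\tau) = V^\sharp_{H_i}(k)$, and Proposition 2.1(v) applied once more to $H_i \leq E_{p-1}$ yields $V(x_i) = \res^\ast_{E_{p-1}, H_i}V^\sharp_{H_i}(k) \subseteq V^\sharp_{E_{p-1}}(S^\tau)$.

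For the reverse inclusion, I apply Proposition 2.1(v) with $E_{p-1} \leq \mathfrak{S}_{p^2-1}$ to obtain
\[ V^\sharp_{E_{p-1}}(S^\tau) = (\res^\ast_{\mathfrak{S}_{p^2-1}, E_{p-1}})^{-1}\res^\ast_{\mathfrak{S}_{p^2-1}, D_\tau}V_{D_\tau}(k), \]
and invoke the Quillen stratification: a point $\alpha \in V^\sharp_{E_{p-1}}(k)$ lies in this preimage if and only if there is a subgroup $H \leq E_{p-1}$ with $H$ $\mathfrak{S}_{p^2-1}$-conjugate to a subgroup of $D_\tau$, and $\alpha \in \res^\ast_{E_{p-1}, H}V^\sharp_H(k)$. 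Since conjugation in $\mathfrak{S}_{p^2-1}$ preserves orbit type, $H \leq E_{p-1}$ is conjugate into $E_{p-2}$ if and only if its number of size-$p$ orbits, which equals $|\supp(H)| := |\{i : \pi_i(H) \neq 0\}|$ with $\pi_i$ the projection onto the $i$th block, is at most $p-2$. For any such $H$, membership $\alpha \in \res^\ast_{E_{p-1}, H}V^\sharp_H(k)$ forces $\alpha_i = 0$ for every $i \notin \supp(H)$, so at least one coordinate of $\alpha$ vanishes and $\alpha \in \bigcup V(x_i)$. The main obstacle I anticipate is exactly this Quillen-stratification step, i.e., translating the preimage condition into the existence of a small-support subgroup; an alternative route avoiding Quillen would be to compute the generic Jordan type of $S^\tau{\downarrow_{E_{p-1}}}$ directly and show that, for $\alpha$ with all $\alpha_i \neq 0$, the cyclic shifted subalgebra $k\langle u_\alpha\rangle$ acts freely on $S^\tau$.
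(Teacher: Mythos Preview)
Your proposal is correct and follows essentially the same route as the paper: both use the identity $V_{\mathfrak{S}_{p^2-1}}(S^\tau)=\res^\ast_{\mathfrak{S}_{p^2-1},E_{p-2}}V_{E_{p-2}}(k)$ together with Proposition~2.1(v) to express $V^\sharp_{E_{p-1}}(S^\tau)$ as a preimage, and then argue the two inclusions. The only notable difference is in the forward inclusion $\bigcup_i V(x_i)\subseteq V^\sharp_{E_{p-1}}(S^\tau)$: the paper observes directly that $V(x_{p-1})$ is contained (since $E_{p-2}=H_{p-1}$) and then invokes the action of $\mathfrak{S}_{p-1}\leq N_{\mathfrak{S}_{p^2-1}}(E_{p-1})/C_{\mathfrak{S}_{p^2-1}}(E_{p-1})$ permuting the coordinates to obtain all the $V(x_i)$, whereas you handle each $H_i$ separately via an explicit conjugation to $D_\tau$; these are the same idea in slightly different packaging. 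For the reverse inclusion the paper simply asserts the containment in $\bigcup_i V(x_i)$ from the preimage formula, so your Quillen--stratification/support argument makes explicit what the paper leaves to the reader.
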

\begin{proof}
We knew that
$V_{\mathfrak{S}_{p^2-1}}(S^{\tau})=\res^\ast_{\mathfrak{S}_{p^2-1},E_{p-2}}V_{E_{p-2}}(k)$.
Apply Proposition \ref{summary of properties of varieties} (v), we
have $V_{E_{p-1}}(S^\tau)=\left
(\res^\ast_{\mathfrak{S}_{p^2-1},E_{p-1}}\right )^{-1}
\res^\ast_{\mathfrak{S}_{p^2-1},E_{p-2}} V_{E_{p-2}}(k)$.
So the rank variety $V^\sharp_{E_{p-1}}(S^\tau)$ lies in the union
of hyperplanes $\bigcup_{i=1}^{p-1} V(x_i)$. Since the rank variety contains
$V(x_{p-1})$ so it contains $V(x_i)$'s where $1\leq i\leq p-1$ given
by the action of $\mathfrak{S}_{p-1}$ on the $p-1$ coordinates of
$V^\sharp_{E_{p-1}}(S^\tau)$.
\end{proof}

Since $k[x_1,\ldots,x_m]$ is a unique factorization domain, prime
ideals of height one are principal. Let $W_{i}(S^{(p^p)})$ be the
union of all irreducible components of
$V_E^\sharp(S^{(p^p)})\subseteq k^p$ of dimension $i$. Since
$W_{p-1}(S^{(p^p)})$ has codimension 1, by the previous remark, it
is defined by a single polynomial in variables $x_1,\ldots,x_p$. By
Lemma \ref{variety for restricted p by p 2},
$$W_{p-1}(S^{(p^p)})\cap V(x_i)\subseteq V_E^\sharp(S^{(p^p)})\cap
V(x_i)=\bigcup_{j\neq i} V(x_j,x_i)$$ and $\dim \left
(W_{p-1}(S^{(p^p)})\cap V(x_i)\right )=p-2$, so
$W_{p-1}(S^{(p^p)})\cap V(x_i)$ contains one of the irreducible
varieties $V(x_j,x_i)$. Since $W_{p-1}(S^{(p^p)})$ is invariant
under the action of $G$, its intersection with $V(x_i)$ contains all
of $V(x_j,x_i)$ where $1\leq j\neq i\leq p$. We have proved the
following lemma.

\begin{lem}\label{variety of largest component}
For any $1\leq i\leq p$, we have $W_{p-1}(S^{(p^p)})\cap
V(x_i)=\bigcup_{j\neq i}V(x_j,x_i)$.
\end{lem}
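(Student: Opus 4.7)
The plan is to establish the two inclusions separately. The forward inclusion drops out of Lemma \ref{variety for restricted p by p 2} together with the Branching Theorem, and the reverse will follow from a codimension count applied to the single polynomial defining $W_{p-1}(S^{(p^p)})$, combined with the $G$-invariance of this subvariety.

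For $\subseteq$, I would first identify the intersection of the full rank variety with a coordinate hyperplane: a generic element $u_\alpha$ with $\alpha_p=0$ lies in the subalgebra $kE_{p-1}\subseteq kE$, so
\[
V_E^\sharp(S^{(p^p)}) \cap V(x_p) \;=\; V_{E_{p-1}}^\sharp\!\bigl(S^{(p^p)}{\downarrow_{E_{p-1}}}\bigr).
\]
Since $E_{p-1}\leq \mathfrak{S}_{p^2-1}$ fixes the symbol $p^2$, the Branching Theorem gives $S^{(p^p)}{\downarrow_{\mathfrak{S}_{p^2-1}}}\cong S^\tau$ with $\tau=(p^{p-1},p-1)$ the unique partition reachable by removing a node from $(p^p)$. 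Applying Lemma \ref{variety for restricted p by p 2} identifies the right-hand side with $\bigcup_{j<p}V(x_j,x_p)$ inside $k^p$, and transporting the equality to an arbitrary coordinate via the $\mathfrak{S}_p\subseteq G$ action on $k^p$ yields $V_E^\sharp(S^{(p^p)})\cap V(x_i)=\bigcup_{j\neq i}V(x_j,x_i)$. Intersecting with $W_{p-1}(S^{(p^p)})$ gives the forward containment.

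For the reverse inclusion, since $k[x_1,\ldots,x_p]$ is a UFD and $W_{p-1}(S^{(p^p)})$ has pure codimension one, its radical ideal is principal, with some generator $f$. The polynomial $f$ cannot be divisible by $x_i$: otherwise $V(x_i)\subseteq W_{p-1}(S^{(p^p)})\subseteq V_E^\sharp(S^{(p^p)})$ would force the $(p-1)$-dimensional variety $V(x_i)$ inside $\bigcup_{j\neq i}V(x_j,x_i)$, which has dimension only $p-2$. Hence $W_{p-1}(S^{(p^p)})\cap V(x_i)$ is a proper closed subset of the irreducible variety $V(x_i)$, so has dimension exactly $p-2$, matching the dimension of each irreducible component $V(x_j,x_i)$ in the union. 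Thus at least one such component $V(x_{j_0},x_i)$ is entirely contained in $W_{p-1}(S^{(p^p)})$. I would then invoke the $G$-invariance of $W_{p-1}(S^{(p^p)})$ (immediate from the fact that $G$ acts by linear automorphisms of $k^p$ and so preserves the stratification of $V_E^\sharp(S^{(p^p)})$ by component dimension): the stabilizer of $V(x_i)$ in $G$ contains $\mathfrak{S}_{\{1,\ldots,p\}\setminus\{i\}}$, which acts transitively on the set $\{V(x_j,x_i):j\neq i\}$, promoting the single component to all of them.

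The main obstacle is really just the first displayed identification, namely that zeroing the last coordinate corresponds precisely to restricting the module to $E_{p-1}$; this is built into the definition $u_\alpha=1+\sum_i\alpha_i(g_i-1)$ but must be applied carefully. Once this reduction is made, the rest is a straightforward interaction between the dimension constraint and symmetry under $G$.
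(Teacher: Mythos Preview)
Your proposal is correct and follows essentially the same approach as the paper: the forward inclusion via Lemma~\ref{variety for restricted p by p 2}, the dimension count $\dim\bigl(W_{p-1}(S^{(p^p)})\cap V(x_i)\bigr)=p-2$ forcing containment of some $V(x_{j_0},x_i)$, and then the $G$-action promoting this to all $j\neq i$. You have in fact supplied more detail than the paper does---in particular the argument that $x_i\nmid f$ (hence the intersection is a genuine hypersurface inside $V(x_i)$), and the explicit identification $V_E^\sharp(S^{(p^p)})\cap V(x_p)=V_{E_{p-1}}^\sharp(S^\tau)$ via the Branching Theorem, both of which the paper leaves implicit.
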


\begin{proof}[Proof of Theorem \ref{theorem p by p}(ii)] By Lemma \ref{variety of largest component} and
Proposition \ref{polynomial defines certain variety}, $f$ has the
form
$$f(x_1,\ldots,x_p)=x_1\ldots
x_p\widetilde{f}'+\sum_{i=1}^pa_i{x_1}^{n_{1i}}\ldots{x_p}^{n_{pi}}$$
such that $\widetilde{f}'\in k[x_1,\ldots, x_p]$, $n_{ij}>0$ for all
$j\neq i$, $n_{ii}=0$ for all $1\leq i\leq p$ and $a_i\in k^\times $
for all $1\leq i\leq p$. The group $ \left
(\mathbb{F}_p^\times\right )^p\rtimes \mathfrak{S}_p$ acts on the
radical ideal generated by $f$. If $f$ is divisible by $x_1\ldots
x_p$, then $V(f)\cap V(x_1)$ contains $V(x_1)$, which contradicts
Lemma \ref{variety of largest component}. By Lemma \ref{f is fixed
by some subgroup}, $f$ is fixed by $\left (\mathbb{F}_p^\times\right
)^p\rtimes \mathfrak{S}_p$ up to a sign and fixed by the subgroup
$S=\left (\mathbb{F}_p^\times\right )^p\rtimes A_p$.

We write $w_i$ for ${x_1}^{n_{1i}}\ldots{x_p}^{n_{pi}}$. Let
$\tau=(12\ldots p)\in A_p$ be the $p$-cycle. For any $1\leq j\leq
p$, we have $\tau^{j-1}f=f$. So
$$(\tau^{j-1}\widetilde{f}'-\widetilde{f}')x_1\ldots
x_p=(a_jw_j-a_1\tau^{j-1}w_1)+\text{terms divisible by $x_j$}$$
where both $w_j, \tau^{j-1}w_1$ are independent of $x_j$. We must
have $a_j=a_1$ and $w_j=\tau^{j-1}w_1$. Let $a=a_1=\ldots=a_p\neq
0$. For any $\beta=(\beta_1,\ldots,\beta_p)\in
(\mathbb{F}^\times_p)^p$, let $\beta(j)$ be the element in
$(\mathbb{F}^\times_p)^p$ such that its $j$th coordinate is
$\beta_j$ and $1$ elsewhere. Note that $\beta(j)f=f$. So
$$(\beta_j(\beta(j)\widetilde{f}')-\widetilde{f}')x_1\ldots
x_p=\sum_{i\neq j}(a-a{\beta_j}^{n_{ji}})w_i$$ For $i\neq j$, $x_i$
divides the left-hand term and all $w_r$'s such that $r\neq i,j$ on
the right-hand side. So $a-a\beta^{n_{ji}}=0$, i.e.,
${\beta_j}^{n_{ji}}=1$. This shows that for $i\neq j$ each $n_{ji}$ is divisible
by $p-1$.

Note that $V(1/a f)=V(f)$, so we may assume that $f$ has the form
$$f(x_1,\ldots,x_p)=x_1\ldots x_p\widetilde{f}+
\sum_{i=0}^{p-1}\tau^i({x_2}^{n_2(p-1)}\ldots
{x_p}^{n_p(p-1)})\eqno(\ast)$$ where
$w_j=\tau^{j-1}{x_2}^{n_2(p-1)}\ldots{x_p}^{n_p(p-1)}$. The sum and
the term $x_1\ldots x_p\widetilde{f'}$ in ($\ast$) are clearly
invariant under the group $S$. By Lemma \ref{some factors of f},
$x_1\ldots x_p\widetilde{f}'={x_1}^{p-1}\ldots
{x_p}^{p-1}\widetilde{f}$ for some homogeneous polynomial
$\widetilde{f}$ fixed by $S$. For any $j\neq 2$, let $(2j)f=\epsilon
f$ where $\epsilon\in\{\pm 1\}$. So
$$((2j)\widetilde{f}-\epsilon\widetilde{f})x_1\ldots x_p=(\epsilon
aw_j-a(2j)w_2)+\text{terms divisible by $x_j$}$$ gives us
$w_j=(2j)w_2$. Comparing the power of $x_{j+1}$, we have $n_2=n_j$.
So $n_2=\ldots=n_p$.
\end{proof}

Let $V\subseteq \mathbb{P}^{n-1}(k)$ be a projective variety of
dimension $m$. The intersection of $V$ and a generic linear subspace
$W$ of $\mathbb{P}^{n-1}$ of dimension $n-m-1$ is a union of $s$
points for some fixed positive integer $s$. This positive integer is
the degree of the projective variety $V$. In case $\dim W<n-m-1$,
then $V\cap W=\varnothing$.

\begin{cor}\label{degree divisible by p-1 square} For $p\geq 3$,
the degree of the projectivized variety $\overline{V_E^\sharp \left
(S^{(p^p)}\right )}$ is non-zero and divisible by $(p-1)^2$.
\end{cor}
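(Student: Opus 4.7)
The plan is to extract $\deg\overline{V_E^\sharp(S^{(p^p)})}$ directly from the explicit polynomial furnished by Theorem \ref{theorem p by p}(ii), so the whole argument reduces to an arithmetic inspection of that polynomial's degree together with a routine dimension count.

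First, I would reduce to the top-dimensional part. By Theorem \ref{theorem p by p}(i), the affine variety $V_E^\sharp(S^{(p^p)})\subset k^p$ has dimension $p-1$, so its projectivisation $\overline{V_E^\sharp(S^{(p^p)})}\subset\mathbb{P}^{p-1}$ has dimension $p-2$, and the degree is computed by intersecting with a generic line in $\mathbb{P}^{p-1}$. A dimension count shows that such a generic line meets no irreducible component of dimension at most $p-3$, so only the top-dimensional piece $W_{p-1}(S^{(p^p)})$ contributes, giving
$$\deg\overline{V_E^\sharp(S^{(p^p)})}=\deg\overline{W_{p-1}(S^{(p^p)})}.$$

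Next, I would invoke Theorem \ref{theorem p by p}(ii): the vanishing ideal $I(W_{p-1}(S^{(p^p)}))$ is the radical principal ideal $\langle f\rangle$ with $f$ homogeneous; since $\langle f\rangle$ is radical, $f$ is squarefree. Hence $\overline{W_{p-1}(S^{(p^p)})}=V(f)\subset\mathbb{P}^{p-1}$ is a projective hypersurface of degree exactly $\deg f$.

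Finally, I would read off $\deg f$ from the stated formula. Each monomial ${x_1}^{n(p-1)}\cdots\widehat{{x_i}^{n(p-1)}}\cdots{x_p}^{n(p-1)}$ is a product of $p-1$ factors each of degree $n(p-1)$, so has degree $n(p-1)^2$; homogeneity of $f$ forces every other term, including $(x_1\cdots x_p)^{p-1}\widetilde{f}$, to share this degree. Thus $\deg f=n(p-1)^2$, which is positive because $n\geq 1$ and is manifestly divisible by $(p-1)^2$. The only point requiring any care is the dimension argument in the first paragraph, and that is entirely routine; no serious obstacle remains once Theorem \ref{theorem p by p}(ii) is in hand.
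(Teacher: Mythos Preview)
Your proposal is correct and follows essentially the same approach as the paper: both reduce to the top-dimensional stratum $W_{p-1}(S^{(p^p)})$ by a routine dimension/genericity argument (the paper phrases this as intersecting the affine cone with a generic $2$-plane, you phrase it projectively as intersecting with a generic line), and both then read the degree off from the radical generator $f$ supplied by Theorem~\ref{theorem p by p}(ii). Your write-up is slightly more explicit in computing $\deg f = n(p-1)^2$ from the displayed monomials, whereas the paper leaves this step implicit, but the substance is identical.
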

\begin{proof}
Let $W\subseteq k^p$ be a generic linear subspace of dimension 2.
For any component $W_i(S^{(p^p)})$ such that $0\leq i\leq p-2$, we
have $W_i(S^{(p^p)})\cap W=\{0\}$. So $$V_E^\sharp(S^{(p^p)})\cap
W=W_{p-1}(S^{(p^p)})\cap W=\text{$s$ lines passing through the
origin}$$ where $s$ is the degree of the homogeneous variety
$W_{p-1}(S^{(p^p)})$, i.e., the degree of a homogeneous polynomial
$f$ defining $W_{p-1}(S^{(p^p)})$ such that $\sqrt{f}=\langle
f\rangle$. So $s$ is non-zero and divisible by $(p-1)^2$.
\end{proof}

\section{The Variety for the Specht Module $S^{\mu}$ where $\mu$ is $p$-regular}\label{not p by p}

\subsection{The permutation modules}

Let $m,n$ be two non-negative integers and $m=\ldots+m_1p+m_0$,
$n=\ldots+n_1p+n_0$ be their $p$-adic expansions. The number $m$ is
$p$-contained in $n$ if $m_i\leq n_i$ for all $i\geq 0$. In this
case we write $m\subseteq_p n$. Note that $m\subseteq_p n$ if and
only if $\binom{n}{m}\not\equiv 0(\mod p)$. Corollaries
\ref{decomposition for p square} and \ref{decomposition for multiple
of p} rely on Theorem 3.3 of \cite{AH}.

\begin{thm}[3.3 of \cite{AH}] The Young module $Y^{(r-s,s)}$ is a direct summand of the
permutation module $M^{(r-m,m)}$ if and only if $m-s\subseteq_p
r-2s$.
\end{thm}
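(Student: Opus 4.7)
The plan is to decompose $M^{(r-m,m)}$ into Young modules by exhibiting natural $k\mathfrak{S}_r$-equivariant maps between the $M^{(r-k,k)}$'s, evaluating their composition as a scalar on the Specht submodule, and reading off the answer via Lucas's theorem. By the Specht filtration of $M^{(r-m,m)}$ recalled just before the theorem, its factors are $S^{(r)}, S^{(r-1,1)}, \ldots, S^{(r-m,m)}$ each with multiplicity one; hence every Young summand of $M^{(r-m,m)}$ has the form $Y^{(r-s,s)}$ with $s\leq m$ and occurs at most once, reducing the problem to deciding, for each such $s$, whether $Y^{(r-s,s)}$ appears.

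Identifying a $(r-k,k)$-tabloid with its second row viewed as a $k$-subset of $\{1,\ldots,r\}$, for each $s\leq m$ I introduce the $k\mathfrak{S}_r$-equivariant maps
\[
\alpha\colon M^{(r-m,m)}\to M^{(r-s,s)},\quad \{T\}\mapsto\sum_{T'\subseteq T,\ |T'|=s}\{T'\},
\]
\[
\beta\colon M^{(r-s,s)}\to M^{(r-m,m)},\quad \{S\}\mapsto\sum_{S'\supseteq S,\ |S'|=m}\{S'\}.
\]
The heart of the argument is to show that $\alpha\circ\beta$ acts on the Specht submodule $S^{(r-s,s)}\subseteq M^{(r-s,s)}$ as multiplication by the scalar $\binom{r-2s}{m-s}$. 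Fixing a $(r-s,s)$-tableau $T$ with columns $(b_i,a_i)_{i=1}^{s}$ and top-row tail $b_{s+1},\ldots,b_{r-s}$, I expand $\alpha\beta(e_T)$ as a linear combination of tabloids $\{W\}$ with coefficients depending only on $|W\cap T|$; a sign-cancellation involution on the column stabilizer $C_T$, which at the first index $i$ where $W$ disagrees with the second row of $T$ flips the sign via $(a_i,b_i)\in C_T$, kills every contribution with $|W\cap T|<s$. Only the diagonal term $\{W\}=\{T\}$ survives, and its coefficient is the count of $m$-subsets of $\{1,\ldots,r\}$ containing the second row of $T$ and disjoint from the partners $\{b_1,\ldots,b_s\}$, which is precisely $\binom{r-2s}{m-s}$.

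With this scalar formula in hand, Lucas's theorem yields $\binom{r-2s}{m-s}\not\equiv 0\pmod p$ if and only if $m-s\subseteq_p r-2s$. In the unit case, $\alpha\circ\beta$ is an automorphism of $S^{(r-s,s)}$, so $\beta|_{S^{(r-s,s)}}$ is injective and split by a rescaling of $\alpha$; the idempotent thus lifted from $\mathrm{End}_{k\mathfrak{S}_r}(S^{(r-s,s)})$ to $\mathrm{End}_{k\mathfrak{S}_r}(M^{(r-m,m)})$ projects onto a summand containing $S^{(r-s,s)}$, and since $Y^{(r-s,s)}$ is the unique indecomposable summand of $M^{(r-s,s)}$ whose Specht filtration contains $S^{(r-s,s)}$, this summand must be $Y^{(r-s,s)}$. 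For the converse, I combine the forward direction, already proved for every $s$ with $\binom{r-2s}{m-s}\not\equiv 0$, with the dimension identity $\binom{r}{m}=\sum_{s\leq m,\ m-s\subseteq_p r-2s}\dim_k Y^{(r-s,s)}$, established by induction on $m$ applied to each $M^{(r-s,s)}$, to exhaust all of $\dim_k M^{(r-m,m)}$, leaving no room for further summands.

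The main obstacle is the scalar computation in the second paragraph: making the sign-cancellation argument on $C_T$ precise requires organising the alternating sum so that the off-diagonal contributions pair up via the explicit involution above, and this is the key technical combinatorial input of the proof. The $\mathrm{Hom}$-space manipulations, idempotent lifting, and dimension bookkeeping in the third paragraph are standard.
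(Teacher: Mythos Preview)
The paper does not supply a proof of this theorem; it is quoted from Henke~\cite{AH} and used as a black box for the two corollaries that follow. So there is no paper proof to compare against, and I assess your argument on its own.

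Your route via the subset/superset maps $\alpha,\beta$ and the scalar $\binom{r-2s}{m-s}$ is the classical one and the forward direction is essentially right, but the cancellation step is misstated. It is \emph{not} true that only the tabloid $\{W\}=\{T\}$ survives in $\alpha\beta(e_T)$: the surviving tabloids are exactly those $\{W\}$ meeting each column $\{a_i,b_i\}$ in a single element, i.e.\ $W=S_\tau$ for some $\tau\in C_T$, each occurring with coefficient $\sgn(\tau)\binom{r-2s}{m-s}$; summing these is what yields $\binom{r-2s}{m-s}\,e_T$. The correct involution pairs $\sigma\leftrightarrow\sigma\cdot(a_i\,b_i)$ at the first column where $|W\cap\{a_i,b_i\}|\ne1$, not at the first disagreement with the second row. (The passage from ``unit scalar on $S^{(r-s,s)}$'' to ``$Y^{(r-s,s)}\mid M^{(r-m,m)}$'' is fine once you use that $S^{(r-s,s)}\subseteq Y^{(r-s,s)}$ inside $M^{(r-s,s)}$: the endomorphism $e\alpha\beta e$ of the indecomposable summand $Y^{(r-s,s)}$, with $e$ the projection, is nonzero on $S^{(r-s,s)}$ and hence an automorphism, so $\beta|_{Y^{(r-s,s)}}$ is split.) More seriously, your converse is circular. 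The dimension identity $\binom{r}{m}=\sum_{s:\,m-s\subseteq_p r-2s}\dim_kY^{(r-s,s)}$ involves the as-yet-unknown $\dim_kY^{(r-m,m)}$; the inductive hypothesis for $m'<m$ determines $\dim_kY^{(r-s,s)}$ only for $s<m$, and the remaining equation at level $m$ is precisely the assertion to be proved. You need an independent ingredient here---either compute the scalar on $S^{(r-s,s)}$ for \emph{all} compositions of orbit maps spanning $\Hom(M^{(r-s,s)},M^{(r-m,m)})$ and $\Hom(M^{(r-m,m)},M^{(r-s,s)})$ and show they all vanish when $p\mid\binom{r-2s}{m-s}$, or establish the binomial identity directly from $p$-adic expansions.
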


\begin{cor}\label{decomposition for p square}
For any integer $p< m\leq p^2/2$, we have a decomposition
$$M^{(p^2-m,m)}\cong M^{(p^2-m+p,m-p)}\oplus Q$$ where $Q$ has a
filtration with Specht factors $$S^{(p^2-m+p-1,m-p+1)},
S^{(p^2-m+p-2,m-p+2)},\ldots, S^{(p^2-m,m)}$$
\end{cor}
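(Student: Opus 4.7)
The plan is to combine Theorem~3.3 of~\cite{AH} with the Krull--Schmidt theorem and the well-definedness of Specht-filtration multiplicities. By AH, $Y^{(p^2-s,s)}$ is a summand of $M^{(p^2-m+p,m-p)}$ iff $(m-p)-s\subseteq_p p^2-2s$, while it is a summand of $M^{(p^2-m,m)}$ iff $m-s\subseteq_p p^2-2s$. The crux is to show that, for $0\leq s\leq m-p$, these two conditions are equivalent; granting this, the Young summands of $M^{(p^2-m+p,m-p)}$ form a sub-multiset (each with multiplicity one) of those of $M^{(p^2-m,m)}$, so Krull--Schmidt yields $M^{(p^2-m,m)}\cong M^{(p^2-m+p,m-p)}\oplus Q$, where $Q$ collects the remaining Young summands.

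To establish the equivalence, set $a=m-s$ and $b=p^2-2s$. Since $s\leq m-p$ one has $a\geq p$, and $m\leq p^2/2$ gives $a<p^2$, so $a=a_1p+a_0$ with $a_1\geq 1$ and $a-p$ has $p$-adic digits $(a_0,a_1-1,0)$. The forward implication $a\subseteq_p b\Rightarrow a-p\subseteq_p b$ is immediate. For the reverse, the case $s=0$ forces $b=p^2$ and both conditions fail (as $a\geq p$), so assume $s\geq 1$, whence $b<p^2$. If $a-p\subseteq_p b$ but $a\not\subseteq_p b$, the only possible failure is $a_1=b_1+1$, which yields $a-b=p+(a_0-b_0)\in[1,2p-1]$, hence $m+s=p^2+(a-b)\geq p^2+1$. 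But $s\leq m-p$ together with $m\leq p^2/2$ gives $m+s\leq 2m-p\leq p^2-p$, a contradiction.

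With the direct sum decomposition established, the Specht factors of $Q$ are read off by subtracting multiplicities. From the excerpt, $M^{(p^2-m,m)}$ has Specht filtration with factors $S^{(p^2)},S^{(p^2-1,1)},\ldots,S^{(p^2-m,m)}$ (each multiplicity one) and $M^{(p^2-m+p,m-p)}$ has factors $S^{(p^2)},\ldots,S^{(p^2-m+p,m-p)}$ (each multiplicity one). By the well-definedness of Specht multiplicities~\cite{SD}, $Q$ satisfies $[S^{(p^2-j,j)}:Q]=1$ for $m-p+1\leq j\leq m$ and $0$ otherwise. Since $Q$ is a direct sum of Young modules, each carrying a Specht filtration, $Q$ itself admits one, necessarily with the listed factors $S^{(p^2-m+p-1,m-p+1)},\ldots,S^{(p^2-m,m)}$.

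The main obstacle is the $p$-adic inequality in the second paragraph; it relies essentially on the bound $m\leq p^2/2$, without which one cannot exclude $m+s\geq p^2+1$ and the reverse implication genuinely fails.
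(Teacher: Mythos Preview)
Your proof is correct and follows essentially the same approach as the paper's: the core step is the $p$-adic digit comparison showing that every Young summand $Y^{(p^2-s,s)}$ of $M^{(p^2-m+p,m-p)}$ also occurs in $M^{(p^2-m,m)}$, using the bound $m+s\leq p^2$ to rule out the borderline case. The paper is terser---it only argues the one implication actually needed for the direct-sum decomposition and leaves the Specht-multiplicity bookkeeping implicit---but your more explicit treatment via Krull--Schmidt and~\cite{SD} is entirely in line with it.
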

\begin{proof} It suffices to show that if $Y^{(p^2-s,s)}$ is a
direct summand of $M^{(p^2-m+p,m-p)}$, then it is also a direct
summand of $M^{(p^2-m,m)}$. Note that the trivial module $k\cong
Y^{(p^2)}$ is not a direct summand of $M^{(p^2-w,w)}$ for any
$0<w\leq p^2/2$. Let $0<s\leq m-p$ and $Y^{(p^2-s,s)}$ be a direct
summand of $M^{(p^2-m+p,m-p)}$, i.e.,
$$a_1p+a_0=m-p-s\subseteq_p p^2-2s=b_1p+b_0$$ where $0\leq
a_0,a_1,b_0,b_1\leq p-1$. Note that $a_0+(a_1+1)p$ is the $p$-adic
expansion of $m-s$. If $a_1+1>b_1$, then $a_1=b_1$. So
$(p^2-2s)-(m-p-s)=b_0-a_0< p$. On the other hand, since $s\leq m\leq
p^2/2$, we have $(p^2-2s)-(m-p-s)=p^2+p-(m+s)\geq p$, which is a
contradiction. This shows that $m-s\subseteq_p p^2-2s$.
\end{proof}

\begin{cor}\label{decomposition for multiple of p} Let $p$ be an odd
prime.
\begin{enumerate}
\item [$\mathrm{(i)}$] If $n\not\equiv 1,2 (\mod p)$, then $M^{(np-2p,2p)}\cong
M^{(np-p,p)}\oplus Q$;

\item [$\mathrm{(ii)}$] If $n\equiv 1(\mod p)$, then
$M^{(np-p,p)}\cong k\oplus N$ for some $k\mathfrak{S}_{np}$-module
$N$ and $M^{(np-2p,2p)}\cong N\oplus R$.
\end{enumerate} In both cases, $Q,R$ have filtrations with Specht factors
$$S^{(np-p-1,p+1)},S^{(np-p-2,p+2)},\ldots,S^{(np-2p,2p)}$$
\end{cor}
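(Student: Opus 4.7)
\medskip\noindent\textbf{Proof plan.}
Following the strategy of Corollary~\ref{decomposition for p square}, the plan is to apply Theorem~3.3 of \cite{AH} to compare the Young module decompositions of $M^{(np-p,p)}$ and $M^{(np-2p,2p)}$, then read off the Specht filtration of the complement by subtraction. For $0\leq s\leq p$, the Young module $Y^{(np-s,s)}$ is a direct summand of $M^{(np-p,p)}$ iff $p-s\subseteq_p np-2s$, and of $M^{(np-2p,2p)}$ iff $2p-s\subseteq_p np-2s$.

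The core step is a $p$-adic digit comparison of these two containment conditions. Writing $2p-s = p+(p-s)$, the second condition differs from the first precisely by the requirement that the digit-$1$ coefficient of $np-2s$ be at least $1$. Splitting into subcases $2s<p$ and $2s>p$ (the case $2s=p$ cannot occur since $p$ is odd), together with the boundary values $s=0$ and $s=p$, a direct computation of the first two $p$-adic digits of $np-2s$ shows that this extra requirement reduces to $n\not\equiv 1\pmod p$ when $2s<p$ or $s=0$, and to $n\not\equiv 2\pmod p$ when $2s>p$ or $s=p$. Consequently, in Case~(i) the two criteria coincide for every $0\leq s\leq p$, so Krull--Schmidt gives $M^{(np-p,p)}$ as a direct summand of $M^{(np-2p,2p)}$, with complement $Q$. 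In Case~(ii), the sole discrepancy is at $s=0$: $Y^{(np)}\cong k$ is a summand of $M^{(np-p,p)}$ (since $p\subseteq_p np$ when $n\equiv 1\pmod p$) but not of $M^{(np-2p,2p)}$ (since $2p\not\subseteq_p np$). Writing $M^{(np-p,p)}\cong k\oplus N$ with $N$ the sum of the nontrivial Young summands common to both permutation modules, we obtain $M^{(np-2p,2p)}\cong N\oplus R$ for a complement $R$.

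To identify the Specht filtrations of $Q$ and $R$, recall that both $M^{(np-p,p)}$ and $M^{(np-2p,2p)}$ carry Specht filtrations (Theorem~17.13 of \cite{GJ1}) with factors $S^{(np-t,t)}$ each of multiplicity one, for $0\leq t\leq p$ and $0\leq t\leq 2p$ respectively. Since Specht filtration multiplicities are well-defined on Young modules \cite{SD}, subtracting multiplicities shows that $Q$ (respectively $R$) has Specht factors $S^{(np-p-1,p+1)},\ldots,S^{(np-2p,2p)}$, each of multiplicity one; the prescribed top-to-bottom ordering is inherited from the standard filtration of $M^{(np-2p,2p)}$.

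The main obstacle is the $p$-adic digit bookkeeping in the comparison step: correctly matching each congruence condition on $n$ with its range of $s$ values, and verifying that the boundary cases $s=0$ and $s=p$ align with the hypotheses $n\not\equiv 1,2\pmod p$ in Case~(i) and $n\equiv 1\pmod p$ in Case~(ii), so that exactly the extra summand $k$ is produced in Case~(ii) and no spurious summands appear in Case~(i).
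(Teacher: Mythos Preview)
Your proposal is correct and follows essentially the same approach as the paper: apply Henke's criterion (Theorem~3.3 of \cite{AH}) to compare the Young summands $Y^{(np-s,s)}$ of the two permutation modules for $0\le s\le p$, then read off the Specht filtration of the complement from the well-definedness of Specht multiplicities. The only cosmetic differences are that the paper proves just the one implication actually needed (if $Y^{(np-s,s)}$ occurs in $M^{(np-p,p)}$ then it occurs in $M^{(np-2p,2p)}$, for $0<s\le p$) via a short bounding trick $p+1\le a_0+2s\le b_0+2s\le 3p-1$ forcing $b_0+2s=2p$, whereas you set up a two-sided equivalence by an explicit $2s\lessgtr p$ case split; and the paper handles $s=0$ via the binomial identity $\binom{np}{2p}=\binom{np}{p}\cdot\frac{(np-p)!/(np-2p)!}{(2p)!/p!}$ rather than directly through Lucas' theorem.
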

\begin{proof} Suppose that $0<s\leq p$ and let $Y^{(np-s,s)}$ be a direct summand of
$M^{(np-p,p)}$, i.e., $$a_0=p-s\subseteq_p
np-2s=\ldots+b_2p^2+b_1p+b_0$$ where $0\leq a_0,b_i<p$ for all
$i\geq 0$. Note that $b_0+2s\equiv 0(\mod p)$ and $a_0+p$ is the
$p$-adic expansion of $2p-s$. On the other hand, we have
$$p+1\leq p+s=a_0+2s\leq b_0+2s\leq (p-1)+2p=3p-1$$ So $b_0+2s=2p$.
Substituting into the $p$-adic expansion of $np-2s$, we see that
$n\equiv 2(\mod p)$ unless $b_1\neq 0$. This shows that
$2p-s\subseteq_p np-2s$ in both (i) and (ii). Suppose that
$n\not\equiv 1(\mod p)$, the equation
$$\binom{np}{2p}=\binom{np}{p}\cdot \frac{(np-2p+1)(np-2p+2)\ldots
(np-p)}{(p+1)(p+2)\ldots 2p}$$ implies that the trivial module $k$
is a direct summand of $M^{(np-p,p)}$ if and only if it is a direct
summand of $M^{(np-2p,2p)}$. So (i) is proved. If $n\equiv 1(\mod
p)$, then $k$ is a direct summand of $M^{(np-p,p)}$ but not of
$M^{(np-2p,2p)}$.
\end{proof}

\begin{lem}\label{generic jordan type of permutation modules}
Let $\mu=(n_1p,\ldots,n_sp)$ be a partition of $np$ such that
$n_1\geq n_2\geq \ldots \geq n_s>0$. Then the generic Jordan type of
the module $M^\mu{\downarrow_{E_n}}$ is $(p^a,1^b)$ where
$$a=\frac{\dim_k M^\mu-b}{p},\,\,b=\frac{n!}{n_1!n_2!\ldots n_s!}$$
\end{lem}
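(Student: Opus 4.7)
The plan is to decompose $M^\mu\downarrow_{E_n}$ into its transitive $E_n$-permutation summands, one for each orbit of $E_n$ on $\mu$-tabloids, and to compute the generic Jordan type of each summand separately, using the fact that generic Jordan type is compatible with direct sums.

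First I would determine the $E_n$-fixed tabloids. The generator $g_i$ fixes a $\mu$-tabloid $\{t\}$ if and only if the $i$-th $p$-block $\{(i-1)p+1,\ldots,ip\}$ is contained in a single row of $t$, since $g_i$ cycles these $p$ entries and fixes everything else. Hence $\{t\}$ is fixed by all of $E_n$ if and only if each of the $n$ $p$-blocks lies inside some row of $t$. Row $j$ has length $n_jp$, so the count of fixed tabloids equals the number of ways to distribute $n$ labelled $p$-blocks among $s$ ordered rows so that row $j$ receives exactly $n_j$ blocks, which is the multinomial coefficient $b=\frac{n!}{n_1!\cdots n_s!}$. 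These fixed tabloids contribute exactly $b$ trivial summands to $M^\mu\downarrow_{E_n}$, hence $b$ copies of $J_1$ to the generic Jordan type.

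Next I would handle the non-trivial orbits. If $H$ is the stabilizer in $E_n$ of a tabloid in such an orbit, then $H$ is a proper subgroup with $[E_n:H]=p^r$ for some $r\geq 1$, and the corresponding summand of $M^\mu\downarrow_{E_n}$ is $k(E_n/H)\cong k_H\uparrow^{E_n}$. Since $E_n$ acts on $E_n/H$ by translation through the quotient map $E_n\to\bar E:=E_n/H$, the $kE_n$-module $k(E_n/H)$ is just the regular representation $k\bar E$ inflated along this projection, and $u_\alpha$ acts on it as $\bar u_\alpha:=1+\sum_i\alpha_i(\bar g_i-1)\in k\bar E$. For generic $\alpha\in k^n$ the image of $u_\alpha-1$ in $J(k\bar E)/J(k\bar E)^2\cong k^r$ is non-zero, because the induced linear map $k^n\to k^r$ is surjective and $r\geq 1$. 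By Dade's lemma applied to the elementary abelian group $\bar E$, the regular representation $k\bar E$ is free of rank $p^{r-1}$ over the shifted cyclic subalgebra $k\langle\bar u_\alpha\rangle$, so this orbit contributes generic Jordan type $J_p^{p^{r-1}}$.

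Summing over all orbits, the generic Jordan type of $M^\mu\downarrow_{E_n}$ has the shape $(p^a,1^b)$ with $b=\frac{n!}{n_1!\cdots n_s!}$, and comparing dimensions via $pa+b=\dim_k M^\mu$ forces $a=(\dim_k M^\mu-b)/p$. The divisibility hypothesis $p\mid\mu_j$ enters only in the fixed-tabloid count — it is precisely what permits a whole $p$-block to fit inside a single row — and once the orbits have been identified, the inflation description of $k(E_n/H)$ together with Dade's lemma makes the $J_p$-contribution of each non-trivial orbit automatic. I do not anticipate any serious technical obstacle beyond this combinatorial bookkeeping.
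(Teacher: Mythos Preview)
Your argument is correct and follows essentially the same route as the paper: the paper phrases the orbit decomposition via the Mackey formula for $k_{\mathfrak{S}_\mu}{\uparrow^{\mathfrak{S}_{np}}}{\downarrow_{E_n}}$, identifies the fixed tabloids exactly as you do, and counts them by the same multinomial coefficient. The only difference is that the paper simply asserts that $k_H{\uparrow^{E_n}}$ is generically free when $H<E_n$, whereas you supply the justification via inflation to $E_n/H$ and Dade's lemma.
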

\begin{proof} The permutation module $M^\mu$ is isomorphic to the trivial
module induced from the Young subgroup $\mathfrak{S}_\mu$ to
$\mathfrak{S}_{np}$. Apply the Mackey decomposition formula
$$M^\mu{\downarrow_{E_n}}\cong k_{\mathfrak{S}_\mu}{\uparrow^{\mathfrak{S}_{np}}}{\downarrow_{E_n}}\cong
\bigoplus_{E_ng\mathfrak{S}_\mu}{}^gk{\downarrow_{{}^g\mathfrak{S}_\mu\cap
E_n}}{\uparrow^{E_n}}$$ Double coset representatives of
$E_n,\mathfrak{S}_\mu$ in $\mathfrak{S}_{np}$ correspond to the
orbits $\mathcal{O}_{E_n}(g_i)$ of the action of $E_n$ on the left
coset representatives $g_1,\ldots,g_m$ of $\mathfrak{S}_\mu$ in
$\mathfrak{S}_{np}$. The stabilizer $\Stab_{E_n}(g_i)$ of $g_i$
consists of precisely the elements $e\in E_n$ such that $eg_i\in
g_i\mathfrak{S}_\mu$. So $\Stab_{E_n}(g_i)=E_n$ if and only if
$\mathcal{O}_{E_n}(g_i)=\{g_i\}$ if and only if the $\mu$-tabloid
corresponding to $g_i$ is fixed by $E_n$. For each $\mu$-tabloid
$\{t\}$, denote by $R_i(t)$ the set consisting of integers in the
$i$th row of $\{t\}$. Take $\{t_0\}$ as the $\mu$-tabloid such that
$$R_i(t_0)=\left \{1+\sum^{i-1}_{j=0}\mu_j,\,\,
2+\sum^{i-1}_{j=0}\mu_j,\,\,\ldots\,\, ,\,\,
\sum^{i}_{j=0}\mu_j\right \}$$ So the $\mu$-tabloid corresponding to
$g_i$ is $\{g_it_0\}$. Note that the $\mu$-tabloids $\{t\}$ fixed by
$E_n$ are precisely those satisfying the property that for each
$1\leq i\leq n$ there exists some $1\leq u\leq s$ such that
$\{1+(i-1)p,\ldots,ip\}\subseteq R_u(t)$. So
$$k_{\mathfrak{S}_\mu}{\uparrow^{\mathfrak{S}_{np}}}{\downarrow_{E_n}}
\cong\left (\bigoplus_{\substack{E_ng\mathfrak{S}_\mu\\ E_n\cap
{}^g\mathfrak{S}_\mu<E_n}}k_{E_n\cap
{}^g\mathfrak{S}_\mu}{\uparrow^{E_n}}\right )\oplus \left
(\bigoplus_{\substack{E_ng\mathfrak{S}_\mu\\ E_n\cap
{}^g\mathfrak{S}_\mu=E_n}}k\right )$$ The summand $k_{E_n\cap
{}^g\mathfrak{S}_\mu}{\uparrow^{E_n}}$ is generically free if
$E_n\cap {}^g\mathfrak{S}_\mu<E_n$; otherwise, it has generic Jordan
type $(1)$, and there are precisely
$\frac{(n_1+\ldots+n_s)!}{n_1!\ldots n_s!}$ of them.
\end{proof}

\subsection{The map $\Phi$}

Every partition can be associated to a set of $\beta$-numbers and
represented by an abacus ($\S 2.7$ of \cite{GJAK}). For the rest of
our discussion, whenever we speak of an abacus of $\mu$, we mean the
abacus associated to the choice of $\beta$-numbers given by the
first column hook lengths of $\mu$. So the abacus of the partition
$\mu=(\mu_1,\ldots,\mu_s)$ has beads $\mu_i+(s-i)$ where $1\leq
i\leq s$. In the case $s\leq p$, the $p$-core $\widetilde{\mu}$ of
$\mu$ is empty if and only if for each $1\leq i\leq s$ there is a
unique $0\leq j_i\leq s-1$ such that $\mu_i+(s-i)\equiv j_i(\mod
p)$. If $\mu_i\not\equiv 0(\mod p)$ for some $1\leq i\leq s$, let
$1\leq b\leq s$ be the number such that $\mu_b\not\equiv 0(\mod p)$
and $\mu_i\equiv 0(\mod p)$ for all $b+1\leq i\leq s$. If we insert
the beads $\mu_s,\mu_{s-1}+1,\ldots,\mu_b+(s-b)$ successively into
the abacus and ignore the position of the bead in each runner, we
see the following
$$\xymatrix{\bullet &\ldots&\bullet&\circ&\ldots&\circ&\bullet_b}$$
where there are $s-b$ beads $\bullet$ on the left-hand side, more
than one $\circ$ in the middle and the last bead $\bullet_b$
corresponds to $\mu_b$. Since $\widetilde{\mu}=\varnothing$, there
is a unique number $1\leq a<b$ such that $\mu_a+(s-a)\equiv
\mu_b+(s-b)-1(\mod p)$, i.e., the bead immediately to the left of
$\bullet_b$.

\begin{hypo}\label{hypothesis for partition less than p parts}
Suppose that $\mu=(\mu_1,\ldots,\mu_s)$ is a partition with $s\leq
p$, $\widetilde{\mu}=\varnothing$ and there are unique numbers
$1\leq a< b\leq s$ with the properties:
\begin{enumerate}
\item [(i)] $\mu_i\equiv 0(\mod p)$ for all $b+1\leq i\leq s$ and
$\mu_b\not\equiv 0(\mod p)$,
\item [(ii)] $\mu_a-a\equiv \mu_b-1-b(\mod p)$.
\end{enumerate}
\end{hypo} Let $\mu$ be a partition satisfying Hypothesis \ref{hypothesis for partition less than p
parts},
$\eta=(\mu_1,\ldots,\mu_{b-1},\mu_b-1,\mu_{b+1},\ldots,\mu_s)$ and
$\Omega$ be the set consisting of all (proper) partitions $\mu(j)$
where $1\leq j\leq s+1$ such that $\mu(j)$ is the partition obtained
from $\eta$ by adding a node to the end of the $j$th row (assuming
$\mu_{s+1}=0$). We claim that $\mu(a),\mu(b)\in\Omega$ and there is
no $\mu(j)\in \Omega$ with empty $p$-core other than $\mu=\mu(b)$
and $\mu(a)$. Suppose that $\mu_{a-1}=\mu_a$, we have
$\mu_{a-1}+(s-(a-1))\equiv \mu_b-(s-b)(\mod p)$. This implies that
the $p$-core of $\mu$ is not empty. This contradiction shows that
$\mu_{a-1}>\mu_a$ and so $\mu(a)\in\Omega$. In the case $j=a$,
$\mu(a)_i+(s-i)=\mu_i+(s-i)$ for all $i\neq a,b$ and
\begin{align*}
\mu(a)_a+(s-a)&\equiv \mu_b+(s-b)(\mod p)\\
\mu(a)_b+(s-b)&\equiv \mu_a+(s-a)(\mod p)
\end{align*} So $\mu(a)$ has empty $p$-core because $\mu$ has empty
$p$-core. It is clear that $\mu=\mu(b)\in\Omega$.

For $1\leq j\leq s\leq p$ such that $j\neq a,b$, we have
$$\mu(j)_b+(s-b)\equiv \mu(j)_a+(s-a)(\mod p)$$ So the $p$-core of
$\mu(j)$ is not empty. In the case $j=s+1$, there are at most $p+1$
beads in the abacus of $\mu(s+1)$. It is clear that
$$\mu(s+1)_a+((s+1)-a)\equiv \mu_b+(s-b)\equiv
\mu(s+1)_b+((s+1)-b)(\mod p)$$ So $\mu(s+1)$ has non-empty $p$-core
unless $s=p$ and $\mu_b+(s-b)\equiv 0(\mod p)$, i.e., $\mu_b\equiv
b(\mod p)$. In this case, we must have $\mu_p\equiv 0(\mod p)$: if
not, then $b=p$ and $\mu_b\equiv 0(\mod p)$, contradicts to the
hypothesis of $\mu$. However there are two beads corresponding to
$\mu(p+1)_{p+1}$ and $\mu(p+1)_{p}$ lying in the first runner of the
abacus, so $\mu(p+1)$ has non-empty $p$-core.

Fix a positive integer $n$. Let $\Lambda$ be the set consisting of
all partitions $\mu=(\mu_1,\ldots,\mu_s)$ of $np$ with $s\leq p$ and
$\widetilde{\mu}=\varnothing$. We define a map $\phi:\Lambda\to
\Lambda$ as follows. $$\phi(\mu)=\left \{\begin{array}{ll}
\mu(a)&\text{if $\mu$ satisfies Hypothesis \ref{hypothesis for
partition less than p parts}}\\ \mu&\text{otherwise}
\end{array}\right .$$

\begin{eg}\label{an example which satisfies the main hypothesis}
Let $p$ be an odd prime, $\mu=(u,v,2^m)$ with $0\leq m\leq p-2$ and
$\widetilde{\mu}=\varnothing$; for instance, taking $u\equiv
p-m-1(\mod p)$ and $v\equiv p-m+1(\mod p)$. If $m>0$, then $b=m+2$
and $a=2$. So $\phi(\mu)=(u,v+1,2^{m-1},1)$. Now we follow the
procedure for $\phi(\mu)$, we get $b=m+2$ and $a=1$. So
$\phi^2(\mu)=(u+1,v+1,2^{m-1})$. By induction on the integer $m$, we
see that $\phi^{2m}(\mu)=(U,V)$ with $U\equiv p-1(\mod p)$, $V\equiv
1(\mod p)$ and $U+V=|\mu|$. So $\phi^{2m+1}(\mu)=(U+1,V-1)$, and
indeed $\phi^t(\mu)=(U+1,V-1)$ for all $t\geq 2m+1$ by our
definition.
\end{eg}

\begin{defn} Let $\mu$ be a partition not more than $p$ parts with
empty $p$-core. There is a positive integer $t(\mu)$ such that for
all integers $t\geq t(\mu)$ we have
$\phi^t(\mu)=\phi^{t(\mu)}(\mu)$. We define
$\Phi(\mu)=\phi^{t(\mu)}(\mu)$.
\end{defn}

Recall from Section \ref{notations} the definition of generic Jordan
type and stable generic Jordan type of a finitely generated
$kE$-module $M$ where $E$ is an elementary abelian $p$-group. We
have $V^\sharp_{E_n}(M)=V^\sharp_{E_n}(k)$ if and only if $M$ is not
generically free. For each $1\leq i\leq p$, we define $n_M(i)$ as
the number of Jordan blocks of size $i$ in the generic Jordan type
of $M$. In the case $M=S^\mu{\downarrow_{E_n}}$ where $\mu$ is a
partition of $np$, we write $n_\mu(i)$ for
$n_{S^\mu{\downarrow_{E_n}}}(i)$. Let $N$ be another finitely
generated $kE$-module. If $n_M(i)=n_N(p-i)$ for all $1\leq i\leq
p-1$, we say that $M,N$ is a pair of modules of complementary stable
Jordan type.

\begin{lem}\label{complementary sgjt} Suppose that $\mu=(\mu_1,\ldots,\mu_s)$
is a partition of $np$ with $1\leq s\leq p$ and
$\widetilde{\mu}=\varnothing$. Let $1\leq a<b\leq s$ be the unique
numbers such that
\begin{enumerate}
\item [(i)] $\mu_i\equiv 0(\mod p)$ for all $b+1\leq i\leq s$ and
$\mu_b\not\equiv 0(\mod p)$,
\item [(ii)] $\mu_a-a\equiv \mu_b-1-b(\mod p)$.
\end{enumerate} Then $\phi(\mu)$ has
empty $p$-core and
$S^\mu{\downarrow_{E_n}},S^{\phi(\mu)}{\downarrow_{E_n}}$ is a pair of
modules of complementary stable Jordan type. In particular,
$V^\sharp_{E_n}(S^\mu)=V^\sharp_{E_n}(k)$ if and only if
$V^\sharp_{E_n}\left (S^{\Phi(\mu)}\right )=V^\sharp_{E_n}(k)$.
\end{lem}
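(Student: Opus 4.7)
The plan is to realize $S^\mu$ and $S^{\phi(\mu)}$ as the two Specht factors of a single direct summand of $S^\eta\uparrow^{\mathfrak{S}_{np}}$, to show that this summand restricts to a generically free $kE_n$-module, and then to extract the complementarity of the restricted Specht modules from the resulting short exact sequence via the Heller operator of $kC_p$. By the Pieri rule (dual to Proposition \ref{summary of symmetric groups representation}(i)), $S^\eta\uparrow^{\mathfrak{S}_{np}}$ admits a Specht filtration whose factors are exactly the $S^{\mu(j)}$ for $\mu(j)\in\Omega$. From the analysis preceding Hypothesis \ref{hypothesis for partition less than p parts}, the only members of $\Omega$ with empty $p$-core are $\mu=\mu(b)$ and $\phi(\mu)=\mu(a)$; Proposition \ref{summary of symmetric groups representation}(ii) therefore decomposes $S^\eta\uparrow^{\mathfrak{S}_{np}}=A\oplus B$, where $A$ inherits a Specht filtration with the two factors $S^\mu$ and $S^{\phi(\mu)}$, and $B$ collects the factors with non-empty $p$-core. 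In particular $\phi(\mu)$ has empty $p$-core.

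Next I compute $S^\eta\uparrow^{\mathfrak{S}_{np}}\downarrow_{E_n}$ via Mackey. The $E_n$-orbits on $\mathfrak{S}_{np}/\mathfrak{S}_{np-1}\cong\{1,\ldots,np\}$ are the $n$ blocks of length $p$; taking representatives $\sigma_i=(np,(i-1)p+1)$, the intersection $E_n\cap{}^{\sigma_i}\mathfrak{S}_{np-1}$ equals the rank-$(n-1)$ subgroup $H_i=\langle e_{i'}:i'\ne i\rangle$ of $E_n$, so
\[
S^\eta\uparrow^{\mathfrak{S}_{np}}\downarrow_{E_n}\;\cong\;\bigoplus_{i=1}^{n} \bigl({}^{\sigma_i}S^\eta\bigr)\downarrow_{H_i}\uparrow^{E_n}.
\]
Each summand is induced from a proper elementary abelian subgroup, so its support variety lies in $\res^\ast_{E_n,H_i}V_{H_i}(k)$, a proper hyperplane of $V_{E_n}(k)$; in particular it is generically free. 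Hence $A\downarrow_{E_n}$, being a direct summand of a generically free module, is itself generically free.

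The Specht filtration of $A$ supplies a short exact sequence $0\to S^{\phi(\mu)}\to A\to S^\mu\to 0$ (or its reverse, depending on the dominance order); restricting to $E_n$ and further to a generic cyclic shifted subgroup $\langle u_\alpha\rangle$ gives a short exact sequence of $kC_p$-modules whose middle term is free. Schanuel's lemma then identifies $S^{\phi(\mu)}\downarrow_{\langle u_\alpha\rangle}$, modulo projectives, with the first syzygy of $S^\mu\downarrow_{\langle u_\alpha\rangle}$, and since the Heller operator of $kC_p$ sends $J_i$ to $J_{p-i}$ for $1\le i<p$, we obtain $n_{\phi(\mu)}(i)=n_\mu(p-i)$ for $1\le i\le p-1$, the asserted complementarity. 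Since $V_{E_n}^\sharp(S^\lambda)=V_{E_n}^\sharp(k)$ precisely when $\sum_{i<p}n_\lambda(i)>0$, the complementarity yields $V_{E_n}^\sharp(S^\mu)=V_{E_n}^\sharp(k)$ if and only if $V_{E_n}^\sharp(S^{\phi(\mu)})=V_{E_n}^\sharp(k)$, and iterating along $\mu,\phi(\mu),\ldots,\Phi(\mu)$ delivers the statement for $\Phi(\mu)$. The main obstacle is the generic freeness of $S^\eta\uparrow^{\mathfrak{S}_{np}}\downarrow_{E_n}$ established in the second paragraph, which rests on the standard support-variety formula $V_G(W\uparrow_H^G)=\res^\ast_{G,H}V_H(W)$ for induced modules.
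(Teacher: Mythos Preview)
Your proof is correct and follows essentially the same route as the paper: decompose $S^\eta{\uparrow^{\mathfrak{S}_{np}}}$ by $p$-cores into a principal-block summand with Specht factors $S^\mu$ and $S^{\phi(\mu)}$ plus a remainder, show the entire induced module is generically free on restriction to $E_n$, and read off the complementarity from the resulting short exact sequence with free middle term. The only difference is in the justification of generic freeness: the paper argues abstractly via the dimension bound
\[
\dim V_{E_n}\bigl(S^\eta{\uparrow^{\mathfrak{S}_{np}}}\bigr)\leq \dim V_{\mathfrak{S}_{np}}\bigl(S^\eta{\uparrow^{\mathfrak{S}_{np}}}\bigr)=\dim \res^\ast_{\mathfrak{S}_{np},\mathfrak{S}_{np-1}}V_{\mathfrak{S}_{np-1}}(S^\eta)\leq n-1,
\]
whereas you carry out the Mackey decomposition explicitly and observe that each summand is induced from a rank-$(n-1)$ subgroup $H_i<E_n$. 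These are two faces of the same coin (relative projectivity with respect to a proper subgroup), and your version has the modest advantage of being self-contained at the level of $E_n$ without passing through $V_{\mathfrak{S}_{np}}$. Your final paragraph, spelling out the Heller-operator step that the paper leaves implicit, is a welcome clarification.
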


\begin{proof} Let $\eta$ and $\Omega$ as before. Since there are precisely two
partitions $\mu,\phi(\mu)$ in $\Omega$ with empty $p$-core, using
Proposition \ref{summary of symmetric groups representation} (i),
(ii) and (iv), we get
$$S^\eta{\uparrow^{\mathfrak{S}_{np}}}{\downarrow_{E_n}}\cong
Q\oplus \text{generically free direct summand}$$ where $Q$ is a
direct summand with factors $S^\mu{\downarrow_{E_n}}$ and
$S^{\phi(\mu)}{\downarrow_{E_n}}$, and Specht modules not in the
principal block contribute to the generically free summand. Since
$$\dim V_{E_n}(S^\eta{\uparrow^{\mathfrak{S}_{np}}})\leq \dim
V_{\mathfrak{S}_{np}}(S^\eta{\uparrow^{\mathfrak{S}_{np}}})=\dim
\res^\ast_{\mathfrak{S}_{np},\mathfrak{S}_{np-1}}V_{\mathfrak{S}_{np-1}}(S^\eta)\leq
n-1$$ the module
$S^{\eta}{\uparrow^{\mathfrak{S}_{np}}}{\downarrow_{E_n}}$ is
generically free. Hence
$S^\mu{\downarrow_{E_n}},S^{\phi(\mu)}{\downarrow_{E_n}}$ is a pair
of modules of complementary stable Jordan type. Since
$S^{\phi(\mu)}{\downarrow_{E_n}}$ has stable generic Jordan type
either the same or complementary to that of
$S^\mu{\downarrow_{E_n}}$, we get the second assertion.
\end{proof}

\begin{lem}\label{generic Jordan type case 1} Let $n\geq 2$ be a
positive integer and $\mu=(np-p,p)$. If $p$ is an odd prime, then
$n-2\leq n_\mu(1)\leq n+1$. If $p=2$, then $n_\mu(1)=n-2$.
\end{lem}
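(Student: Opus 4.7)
The plan is to analyze $S^{(np-p,p)}\downarrow_{E_n}$ via the Young module decomposition of $M^{(np-p,p)}$, to obtain a short exact sequence, and then to apply the Jordan-block extension classification from Proposition~\ref{summary of symmetric groups representation}(iii).

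First, using Theorem 3.3 of \cite{AH}, I would identify the Young module summands of $M^{(np-p,p)}$: the condition $p-s \subseteq_p np-2s$ holds in the principal block only for $s = 0$ (iff $p \nmid n$) and $s = p$, so $Y^{(np-1,1)}$ is never a summand and $S^{(np-p,p)}$ is the bottom Specht factor of $Y^{(np-p,p)}$. This gives the short exact sequence
$$0 \to S^{(np-p,p)} \to Y^{(np-p,p)} \to Q \to 0,$$
with $Q \cong S^{(np-1,1)}$ when $p\nmid n$ and $Q \cong M^{(np-1,1)}$ when $p\mid n$. The latter identification is forced because the head of $Y^{(np-p,p)}$ is the simple $D^{(np-p,p)} \neq k$, so $k$ cannot be a quotient of $Q$, ruling out the split extension.

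Next I would compute the generic Jordan types of $Y^{(np-p,p)}\downarrow_{E_n}$ and $Q\downarrow_{E_n}$. Lemma~\ref{generic jordan type of permutation modules} gives $M^{(np-p,p)}\downarrow_{E_n}$ generic Jordan type $(p^a, 1^n)$; the non-principal summands of $M^{(np-p,p)}$ are generically free by Proposition~\ref{summary of properties of varieties}(iv), and the summand $k$ contributes one $J_1$ exactly when $p\nmid n$, so $Y^{(np-p,p)}\downarrow_{E_n}$ has generic Jordan type $(p^{a_Y}, 1^{n-\delta})$ with $\delta = 1$ if $p\nmid n$ and $\delta = 0$ if $p\mid n$. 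For $Q$, a direct Mackey computation identifies $M^{(np-1,1)}\downarrow_{E_n}$ as the direct sum of $n$ regular $kC_p$-modules of generic Jordan type $(p^n)$, so the augmentation kernel $S^{(np-1,1)}\downarrow_{E_n}$ has generic Jordan type $(p^{n-1}, p-1)$.

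Finally, I would analyze the sequence at a generic $u_\alpha$ using Proposition~\ref{summary of symmetric groups representation}(iii). Each $J_p$ summand of $Q$ must pull back through a $J_p$ summand of $Y^{(np-p,p)}$ since $\Hom(J_1, J_p)$ lands in the socle of $J_p$ and cannot hit its head; the single $J_{p-1}$ summand of $Q$ (present only when $p\nmid n$) similarly pulls back through a $J_p$, but may absorb one $J_1$ of $Y^{(np-p,p)}$ in the process, in which case by Proposition~\ref{summary of symmetric groups representation}(iii) with $(i,j,a,b) = (2, p-1, p, 1)$ the kernel contribution becomes $J_2$ rather than the natural $J_1$. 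Enumerating the possible pullback patterns for the $n-\delta$ available $J_1$ summands of $Y^{(np-p,p)}$ then gives the range $n-2 \le n_\mu(1) \le n+1$ for $p$ odd. When $p = 2$, $Q$ has generic Jordan type $(2^{n-1}, 1)$, and the $J_1$ summand of $Q$ must be pulled back by a combination involving exactly two $J_1$ summands of $Y^{(2n-2,2)}$ by the characteristic-two Jordan extension classification, pinning $n_\mu(1) = n-2$. The main obstacle is this final bookkeeping: tracking exactly how the $n-\delta$ $J_1$ summands of $Y^{(np-p,p)}$ distribute between the kernel and the various pullback absorptions, and identifying which patterns are actually realized by the specific embedding $S^{(np-p,p)} \hookrightarrow Y^{(np-p,p)}$.
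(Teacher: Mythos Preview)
For odd $p$, your approach is essentially the paper's: both set up a short exact sequence with $S^{(np-p,p)}$ as kernel, middle term the principal-block part of $M^{(np-p,p)}$ (which is $Y^{(np-p,p)}$ when $p\mid n$ and $k\oplus Y^{(np-p,p)}$ when $p\nmid n$), and a quotient filtered by $S^{(np-1,1)}$ and possibly $k$. The paper keeps the trivial factor in the quotient throughout rather than splitting into cases via Henke's theorem; it then determines the two possible stable types of that quotient and enumerates the resulting possible stable types of $S^{(np-p,p)}{\downarrow_{E_n}}$ using Proposition~\ref{summary of symmetric groups representation}(iii), obtaining the bounds $n-2\le n_\mu(1)\le n+1$ just as you outline.

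For $p=2$, however, your argument has a genuine gap. The extension classification by itself cannot pin down $n_\mu(1)=n-2$. From the short exact sequence $0\to S^{(2n-2,2)}{\downarrow}\to Y^{(2n-2,2)}{\downarrow}\to Q{\downarrow}\to 0$ with stable middle type $(1^{n-\delta})$ and stable right-hand type $(1)$ (or $(1^2)$, or empty), the stable kernel type could equally well be $(1^{n-2})$ or $(1^{n})$, depending on whether the map is stably surjective or stably zero; nothing in Proposition~\ref{summary of symmetric groups representation}(iii) decides this. Your assertion that ``the $J_1$ summand of $Q$ must be pulled back by a combination involving exactly two $J_1$ summands'' is not a consequence of the classification---a single $J_1$ of $Y$ mapping isomorphically, or a single $J_2$ mapping onto $J_1$, are also admissible. (Your identification $Q\cong M^{(np-1,1)}$ when $p\mid n$ is also not fully justified: ruling out the split extension does not by itself determine the isomorphism type, nor the restricted Jordan type, of $Q$.) The paper handles $p=2$ by an explicit computation instead: it writes down the surjection $M^{(2n-2,2)}\to S^{(2n-1,1)}\oplus k$, $t_{i,j}\mapsto (t_i+t_j,1)$, identifies the non-free parts $M_1\cong(1^n)$ and $M_2\cong(1^2)$ on source and target, and checks directly that the induced stable map $M_1\to M_2$ is split surjective, forcing the stable kernel type $(1^{n-2})$. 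You need this concrete information about the specific map, not merely the existence of the short exact sequence.
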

\begin{proof} Let $p$ be an odd prime. Note that $S^{(np)}{\downarrow_{E_n}}\cong k_{E_n}$ has generic Jordan type $(1)$
and $S^{(np-1,1)}{\downarrow_{E_n}}$ has stable generic Jordan type
$(p-1)$, which is complementary to $(1)$. The permutation module
$M^{(np-p,p)}$ decomposes into $Q\oplus S$ where
$S{\downarrow_{E_n}}$ is a generically free $kE_n$-module and $Q$
has three Specht factors $S^{(np)},S^{(np-1,1)}$ and $S^{(np-p,p)}$
reading from the top. The stable generic Jordan type for
$M^{(np-p,p)}{\downarrow_{E_n}}$ is given in Lemma \ref{generic
jordan type of permutation modules} as $(1^n)$. Suppose that $A$ is
the quotient $Q/S^{(np-p,p)}$. We want to figure out the possible
stable generic Jordan types of $A{\downarrow_{E_n}}$. Considering
the short exact sequence $0\to S^{(np-1,1)}{\downarrow_{E_n}}\to
A{\downarrow_{E_n}}\to k\to 0$ and using Proposition \ref{summary of
symmetric groups representation} (iii), we see that
$A{\downarrow_{E_n}}$ is generically free or has stable generic
Jordan type $(p-1,1)$. Now consider the short exact sequence $0\to
S^{(np-p,p)}{\downarrow_{E_n}}\to Q{\downarrow_{E_n}}\to
A{\downarrow_{E_n}}\to 0$. In the case $A{\downarrow_{E_n}}$ is
generically free, $S^{(np-p,p)}{\downarrow_{E_n}}$ has stable
generic Jordan type the same as $M^{(np-p,p)}{\downarrow_{E_n}}$,
which is $(1^n)$. Suppose that the stable generic Jordan type of
$A{\downarrow_{E_n}}$ is $(p-1,1)$. Using Proposition \ref{summary
of symmetric groups representation} (iii), one can figure out that
the possible stable generic Jordan types of
$S^{(np-p,p)}{\downarrow_{E_n}}$ are one of the listed below:
\begin{enumerate}
\item[(a)] $(p-1,1^{n+1})$,
\item[(b)] $(p-1,2,1^{n-1})$,
\item[(c)] $(1^n)$,
\item[(d)] $(2,1^{n-2})$.
\end{enumerate} In any case, $n-2\leq n_\mu(1)\leq n+1$.

Suppose that $p=2$. Let $\alpha=(\alpha_1,\ldots,\alpha_n)\in k^n$
be a generic point and
$u_\alpha=1+\sum^n_{i=1}\alpha_i((2i-1,2i)-1)$. Consider the map
$$M^{\mu}{\downarrow_{k\langle u_\alpha\rangle }}= M_1\oplus
P_1\xrightarrow{f} S^{(2n-1,1)}{\downarrow_{k\langle u_\alpha\rangle
}}\oplus S^{(2n)}{\downarrow_{k\langle u_\alpha\rangle }}=M_2\oplus
P_2$$ given by $f(t_{i,j})=(t_i+t_j,1)$ where $t_{i,j}$ is the
$\mu$-tabloid with $i,j$ in the second row and $t_i$ is the
$(2n-1,1)$-tabloid with $i$ in the second row, also $M_1,M_2$ are
the non-free parts of $M^\mu{\downarrow_{k\langle u_\alpha\rangle}},
S^{(2n-1,1)}{\downarrow_{k\langle u_\alpha\rangle}}\oplus
S^{(2n)}{\downarrow_{k\langle u_\alpha\rangle}}$ respectively. Note
that the Jordan types of $M_1,M_2$ are $(1^{n}),(1^2)$ respectively,
and $P_1,P_2$ are projective modules or equivalently free modules.
The kernel of the map $f$ is precisely $S^\mu{\downarrow_{k\langle
u_\alpha\rangle}}$. The module $M_1$ is $k$-spanned by $t_{2i-1,2i}$
for $1\leq i\leq n$; meanwhile, $P_1$ is $k$-spanned by those
$t_{i,j}$'s not listed before. The module $M_2$ is $k$-spanned by
both the tabloid in $S^{(2n)}$ and $\sum t_i\in
S^{(2n-1,1)}\subseteq M^{(2n-1,1)}$. Consider the map $f$ in the
stable module category, the induced map
$\widetilde{f}:M_1\twoheadrightarrow M_2$ splits and it has kernel
with Jordan type $(1^{n-2})$. So $\ker f\cong \ker
\widetilde{f}\oplus P_3$ for some projective module $P_3$. This
shows that the stable generic Jordan type of
$S^\mu{\downarrow_{E_n}}$ is $(1^{n-2})$, i.e., $n_\mu(1)=n-2$.
\end{proof}

\subsection{Proof of Theorem \ref{theorem not p by p}}

Suppose that $\mu$ is a partition satisfying Hypothesis
\ref{hypothesis}. In order to prove Theorem \ref{theorem not p by
p}, by Lemma \ref{complementary sgjt}, it suffices to show that
$V^\sharp_{E_n}\left (S^{\Phi(\mu)}\right )=V_{E_n}^\sharp(k)$. The
idea of the proof is to show that $n_{\Phi(\mu)}(r)>0$ for some
$1\leq r\leq p-1$.

\begin{proof}[Proof of Theorem \ref{theorem not p by p}] The result is obvious for Type (H3); indeed, $n_{\Phi(\mu)}(1)=1$.

\underline{Type (H1)}: For any $1\leq m< p/2$, let
$\eta(m)=(p^2-mp,mp)$ and $\lambda(m)=(p^2-mp-1,mp+1)$. Note that
$\Phi(\lambda(m))=\eta(m)$. Taking $n=p$ in Lemma \ref{generic
Jordan type case 1}, we have $n_{\eta(1)}(1)\geq p-2$. We claim that
for all $1\leq m<p/2$
$$n_{\eta(m)}(1)+n_{\eta(m)}(2)+\ldots+n_{\eta(m)}(m)\geq p-2\eqno(\ast)$$
By induction on $m$, suppose that we have the inequality as in
$(\ast)$. By Corollary \ref{decomposition for p square}, we have a
decomposition
$$M^{\eta(m+1)}\cong M^{\eta(m)}\oplus S\oplus F$$ where $F{\downarrow_{E_p}}$ is
a generically free $kE_p$-module and $S$ has a filtration with
Specht factors $S^{\lambda(m)}, S^{\eta(m+1)}$ reading from the top.
By Lemma \ref{generic jordan type of permutation modules},
$S{\downarrow_{E_p}}$ has stable generic Jordan type
$1^{\binom{p}{m+1}-\binom{p}{m}}$. Note that
$S^{\eta(m)}{\downarrow_{E_p}}, S^{\lambda(m)}{\downarrow_{E_p}}$ is
a pair of modules of complementary stable Jordan type, so
$S^{\lambda(m)}{\downarrow_{E_p}}$ has stable generic Jordan type
satisfying the inequality
$$n_{\lambda(m)}(p-m)+n_{\lambda(m)}(p-m+1)+\ldots+n_{\lambda(m)}(p-1)\geq p-2$$ The short
exact sequence $0\to S^{\eta(m+1)}{\downarrow_{E_p}}\to
S{\downarrow_{E_p}}\to S^{\lambda(m)}{\downarrow_{E_p}}\to 0$ leads
to the short exact sequences $0\to J_i\to J_a\oplus J_b\to J_j\to 0$
with $a,b\in \{0,1,p\}$. We focus on the cases where $p-m\leq j\leq
p-1$. Note that we must have either $J_p\oplus J_1$ or $J_p$ in the
middle. Using Proposition \ref{summary of symmetric groups
representation} (iii), in the first case, $i+j=1+p$ and hence $2\leq
i\leq m+1$; in the latter case, $i+j=p$ and hence $1\leq i\leq m$.
So we conclude that
$$n_{\eta(m+1)}(1)+n_{\eta(m+1)}(2)+\ldots+n_{\eta(m+1)}(m+1)\geq
p-2$$ Since $n_{\eta(m)}(i)$ is non-zero for some $1\leq i\leq m$,
$S^{\eta(m)}{\downarrow_{E_p}}$ is not generically free. So the rank
variety $V^\sharp_{E_p}(S^{\Phi(\mu)})$ is $V^\sharp_{E_p}(k)$ in
this case.

\underline{Type (H2)}: Note that $n>2$. The proof is akin to the
proof of Type (H1), except that we use Corollary \ref{decomposition
for multiple of p} instead of Corollary \ref{decomposition for p
square} . So $n_{\Phi(\mu)}(1)+n_{\Phi(\mu)}(2)\geq n-2>0$.

\underline{Type (H4)}: Consider the case where
$\Phi(\mu)=(2n-2,2)\neq (2,2)$. Note that $n>2$. Now we use Lemma
\ref{generic Jordan type case 1} for $p=2$ to deduce that
$n_{\Phi(\mu)}(1)=n-2>0$. Now take $\Phi(\mu)=(2n-4,4)\neq (4,4)$
where $n> 4$. The permutation module $M^{\Phi(\mu)}$ has a
filtration with Specht factors $S^{\lambda(i)}$ one for each $0\leq
i\leq 4$ where $\lambda(i)=(2n-i,i)$. Over $p=2$, any short exact
sequence of $kE$-modules $0\to A\to B\to C\to 0$ satisfies the
inequality $n_B(1)\leq n_A(1)+n_C(1)$. So
\[\binom{n}{2}=n_{M^{\Phi(\mu)}{\downarrow_{E_n}}}(1)\leq
\sum_{i=0}^4 n_{\lambda(i)}(1)\] Note that
$n_{\lambda(0)}(1)=n_{\lambda(1)}(1)=1$ and
$n_{\lambda(3)}(1)=n_{\lambda(2)}(1)=n-2$. Suppose that
$S^{\Phi(\mu)}{\downarrow_{E_n}}$ is generically free, i.e.,
$n_{\Phi(\mu)}(1)=0$, we deduce that $n^2-5n+4\leq 0$, i.e., $1\leq
n\leq 4$, a contradiction. So $S^{\Phi(\mu)}{\downarrow_{E_n}}$ is
not generically free, i.e.,
$V^\sharp_{E_n}(S^{\Phi(\mu)})=V^\sharp_{E_n}(k)$.
\end{proof}

\begin{eg}
Let $p$ be an odd prime and $\mu$ be the partition as in Example
\ref{an example which satisfies the main hypothesis}. Then
$V^\sharp_{E_p}(S^\mu)=V^\sharp_{E_p}(k)$ and the complexity of
$S^\mu$ is $p$.
\end{eg}

\begin{eg} Let $p=3$ and $\mu$ be a partition of 9. Suppose that
$\mu\neq (3^3)$. In the case $\widetilde{\mu}\neq \varnothing$, we
use Proposition \ref{summary of properties of varieties} (iv) and
the hook formula to calculate the rank variety
$V_{E_3}^\sharp(S^\mu)$; otherwise, we use Proposition \ref{summary
of symmetric groups representation} (v) and Theorem \ref{theorem not
p by p}. In the case $\mu=(3^3)$, the software MAGMA was used by Jon
Carlson to determine the rank variety $V_{E_3}^\sharp\left
(S^{(3^3)}\right )$ which is precisely the zero set of the radical
ideal $$\langle
{x_1}^2{x_2}^2+{x_2}^2{x_3}^2+{x_1}^2{x_3}^2\rangle$$ Note that it
fits into Theorem \ref{theorem p by p} (ii), where $\widetilde{f}=0$
and $n=1$. \medskip
\begin{center}\underline{The Variety $V_{E_3}(S^\mu)$ for the case $p=3$ and
$|\mu|=9$.} \end{center} \medskip
\begin{minipage}[b]{.5\linewidth}\centering
\begin{tabular}{cc} \toprule
$\mu$&\hspace{.5cm}$V_{E_3}(S^\mu)$\\
\midrule $(9),(1^9)$&$V_3$\\
$(81),(21^7)$&$V_3$\\
$(72),(2^21^5)$&$V_1$\\
$(63),(2^31^3)$&$V_3$\\
$(54),(2^41)$&$V_3$\\
$(71^2),(31^6)$&$V_3$\\
$(621),(321^4)$&$V_3$\\
$(531),(32^21^2)$&$\{0\}$\\
\bottomrule
\end{tabular}
\end{minipage}
\begin{minipage}[b]{.5\linewidth}\centering
\begin{tabular}{cc}
\toprule
$\mu$&\hspace{.5cm}$V_{E_3}(S^\mu)$\\
\midrule
$(4^21),(32^3)$&$V_3$\\
$(52^2),(3^21^3)$&$V_3$\\
$(432),(3^221)$&$V_3$\\
$(521^2),(421^3)$&$V_1$\\
$(431^2),(42^21)$&$V_1$\\
$(3^3)$&$V_2$\\
$(61^3),(41^5)$&$V_3$\\
$(51^4)$&$V_3$\\
\bottomrule
\end{tabular}
\end{minipage}
\medskip

\noindent In Table 1, $V_1=\res^\ast_{E_3,E_1}V_{E_1}(k)$, $V_2=V(
{x_1}^2{x_2}^2+{x_2}^2{x_3}^2+{x_1}^2{x_3}^2)$ and $V_3=V_{E_3}(k)$
where $E_1=\langle (1,2,3)\rangle\subseteq \mathfrak{S}_9$. The
subscript $i$ of $V_i$ gives the dimension, which is the complexity
of $S^\mu$.
\end{eg}

\thankyou{I thank my supervisor David Benson for his guidance and
suggestions in this paper.}

\end{document}